\let\oldmarginpar\marginpar 
\renewcommand\marginpar[1]{\-\oldmarginpar{\raggedright\small\sf #1}}
\newcommand{\nc}{\newcommand}
\nc{\rnc}{\renewcommand}
\nc{\bs}{\backslash}
\nc{\te}{\otimes}
\nc{\lf}{\lfloor} 
\nc{\rf}{\rfloor}
\nc{\lc}{\lceil}  
\nc{\rc}{\rceil}
\nc{\lr}{\longrightarrow}
\nc{\sr}{\stackrel}
\nc{\dar}{\dashrightarrow}
\nc{\thra}{\twoheadrightarrow}
\nc{\la}{\langle}
\nc{\ra}{\rangle} 
\nc{\ms}{\mathscr}
\nc{\mc}{\mathcal}
\nc{\mb}{\mathbb}
\nc{\mf}{\mathbf}
\nc{\mr}{\mathrm}
\nc{\mg}{\mathfrak}
\nc{\bP}{\mathbb{P}}
\rnc{\P}{\mathbb{P}}
\nc{\Q}{\mathbb{Q}}
\nc{\Z}{\mathbb{Z}}
\nc{\C}{\mathbb{C}}
\nc{\R}{\mathbb{R}}
\nc{\A}{\mathbb{A}}
\nc{\V}{\mathbb{V}}
\nc{\W}{\mathbb{W}}
\nc{\N}{\mathbb{N}}
\nc{\D}{\mathbb{D}}
\nc{\G}{\mathbb{G}}
\nc{\F}{\mathbb{F}}
\nc{\qb}{\overline{\mathbb{Q}}}
\nc{\del}{\partial}
\nc{\wt}{\widetilde}
\nc{\wh}{\widehat}
\nc{\ov}{\overline}
\nc{\un}{\underline}
\nc{\naive}{\!\sim_n}
\nc{\Spec}{\mr{Spec}}
\nc{\omx}{\omega_X}
\nc{\ep}{\epsilon}
\nc{\ve}{\varepsilon}
\nc{\vt}{\vartheta}
\rnc{\l}{\lambda}
\rnc{\k}{\kappa}
\nc{\ovl}{\ov{\lambda}}
\nc{\vl}{\mb{V}_{\ovl}}
\nc{\dl}{\mb{D}_{\ovl}}
\nc{\mnb}{\ov{\mr{M}}_{0,n}}
\nc{\mn}{\mr{M}_{0,n}}
\nc{\mel}{\ov{\mr{M}}_{1,1}}
\nc{\mfb}{\ov{\mr{M}}_{0,4}}
\nc{\mof}{\mr{M}_{0,4}}
\nc{\mgnb}{\ov{\mr{M}}_{g,n}}
\nc{\mgn}{\ov{\mr{M}}_{g,n}}
\nc{\omc}{\ov{\mr{M}}}
\rnc{\sl}{\shoveleft}
\nc{\res}{\operatorname{Res}}
\nc{\pic}{\operatorname{Pic}}
\nc{\spec}{\operatorname{Spec}}
\nc{\im}{\operatorname{Im}}
\nc{\gal}{\operatorname{Gal}}
\nc{\fr}{\operatorname{Fr}}
\nc{\ed}{\operatorname{ed}}
\nc{\rank}{\operatorname{rank}}
\nc{\h}{\operatorname{H}}
\nc{\ch}{\operatorname{char}}
\nc{\sw}{\operatorname{sw}}
\nc{\rsw}{\operatorname{rsw}}
\nc{\supp}{\operatorname{supp}}
\nc{\Mor}{\operatorname{Mor}}
\nc{\Per}{\operatorname{Per}}
\nc{\prep}{\operatorname{Prep}}
\nc{\End}{\operatorname{End}}
\nc{\Orb}{\operatorname{Orb}}
\nc{\tr}{\operatorname{Tr}}
\nc{\br}{\bar{\rho}}
\newtheorem{thm}{Theorem}[section]
\newtheorem{prop}[thm]{Proposition}
\newtheorem{lem}[thm]{Lemma}
\theoremstyle{definition}
\newtheorem{defn}[thm]{Definition}
\newtheorem{rem}[thm]{Remark}
\numberwithin{equation}{section}
\title{Lifting $G$-irreducible but $\mr{GL}_n$-reducible Galois
  representations} \thanks{We are grateful to Wushi Goldring for
  stimulating conversations.  N.F. was supported by the DAE,
  Government of India, project no. RTI4001. C.K.  would like to thank
  TIFR, Mumbai for its hospitality, in periods when some of the work
  was carried out. S.P. was supported by NSF grants DMS-1700759 and
  DMS-1752313. We also thank the referees for their comments and
  corrections which helped to improve the exposition.  }
\begin{document}

\author[N.~Fakhruddin]{Najmuddin Fakhruddin}
\address{School of Mathematics, Tata Institute of Fundamental Research, Homi Bhabha Road, Mumbai 400005, INDIA}
\email{naf@math.tifr.res.in}
\author[C.~Khare]{Chandrashekhar Khare}
\address{UCLA Department of Mathematics, Box 951555, Los Angeles, CA 90095, USA}
\email{shekhar@math.ucla.edu}
\author[S.~Patrikis]{Stefan Patrikis}
\address{Department of Mathematics, The University of Utah, 155 S 1400 E, Salt Lake City, UT 84112, USA}
\email{patrikis@math.utah.edu}


\begin{abstract}
In recent work, the authors proved a general result on lifting $G$-irreducible odd Galois representations $\gal(\overline{F}/F) \to G(\ov{\F}_{\ell})$, with $F$ a totally real number field and $G$ a reductive group, to geometric $\ell$-adic representations. In this note we take $G$ to be a classical group and construct many examples of $G$-irreducible representations to which these new lifting methods apply, but to which the lifting methods currently provided by potential automorphy theorems do not.
\end{abstract}
\maketitle

\section{Introduction}
Let $G$ be a smooth group scheme over $\Z_{\ell}$ such that $G^0$ is a
split connected reductive group scheme, and $G/G^0$ is finite of order
prime to $\ell$. Let $F$ be a number field with algebraic closure
$\ov{F}$ and absolute Galois group $\Gamma_F= \gal(\ov{F}/F)$, and let
$\br \colon \Gamma_F \to G(\ov{\F}_{\ell})$ be a continuous
homomorphism. The question of whether $\br$ admits a geometric (or,
more ambitiously, automorphic or motivic) lift $\rho \colon \Gamma_F
\to G(\ov{\Z}_\ell)$ has attracted a great deal of interest at least
since Serre formulated his modularity conjecture in the case $F=\Q$,
$G= \mr{GL}_2$, and $\br$ absolutely irreducible and odd (see
Definition \ref{defn:odd} below). There are essentially two types of
methods, first studied for $G= \mr{GL}_2$ and $F$ totally real, for
proving such lifting theorems, a purely Galois-theoretic approach
developed by Ramakrishna (\cite{ramakrishna:lifting},
\cite{ramakrishna02}), and an approach developed in
\cite{khare-wintenberger:serre0} and \cite{khare:serrelevel1} making
crucial use of potential automorphy (\cite{taylor:remarks}). Much
further work on both of these methods (and their descendants) has led
to the papers \cite{fkp:reldef}, for the Galois-theoretic method, and
\cite{blggt:potaut}, for the automorphic methods, which more or less
represent the state-of-the-art (see also \cite{calegari-emerton-gee}
for a refinement of the local hypotheses in \cite{blggt:potaut}).

In the present paper, we will discuss examples when $G$ is a classical
group, and where the Galois-theoretic methods of \cite{fkp:reldef}
apply, but where current potential automorphy methods do not. The
basic feature here constraining the range of applicability of
potential automorphy methods is the distinction between $\br$ that are
$G$-irreducible and $\br$ that remain irreducible after composition
with a faithful representation $G \to \mr{GL}_N$: recall that $\br$ is
$G$-irreducible if its image is contained in no proper parabolic
subgroup of $G(\ov{\F}_{\ell})$.

We first carry out our analysis for examples (\S \ref{elementary} and \S \ref{pIGP}) whose construction is rather straightforward, but which we hope serves to convince the reader of the ubiquity of the phenomenon we are highlighting; in \S \ref{elementary} we simply sum two-dimensional representations, while in \S \ref{pIGP} we explain a quite general but ``soft" approach to constructing examples using Moret-Bailly's result on the ``potential inverse Galois problem with local conditions" (\cite{moret-bailly}). The bulk of the paper treats a more elaborate example (\S \ref{zywinasection}-\S \ref{complexconj}) arising from orthogonal representations $\Gamma_{\Q} \to \mr{SO}_N(\mathbb{F}_{\ell})$ constructed in Zywina's work on the inverse Galois problem (\cite{zywina:orthogonal}). 

Whether one approaches lifting problems via potential automorphy or just via Galois deformation theory, a critical hypothesis is the following generalization of Serre's ``oddness" condition:
\begin{defn}\label{defn:odd}
We say $\br \colon \Gamma_F \to G(\ov{\F}_{\ell})$ is odd if for all $v \mid \infty$,
\[
h^0(\Gamma_{F_v}, \br(\mg{g}^{\mr{der}})) = \dim (\mr{Flag}_{G^0}),
\]
where $\br(\mg{g}^{\mr{der}})$ is the Lie algebra of the derived group $G^{\mr{der}}$ of $G^0$, equipped with the action of $\Gamma_F$ via the composite $\mr{Ad}\circ \br$, and $\mr{Flag}_{G^0}$ is the flag variety of $G^0$.
\end{defn}
In the examples of \S \ref{elementary} and \S \ref{pIGP}, this oddness condition will follow immediately from the construction. Most of our technical work is to establish the oddness condition in Zywina's examples; we deduce it from a topological calculation of the action of complex conjugation on (a piece of) the cohomology of a real elliptic surface. We hope these calculations may be of some independent interest.  

We state  the main theorem of this note (this is Theorem \ref{thm:main} below).

\begin{thm} \label{thm:main-intro}%
Let $N \geq 6$ be an even integer, and let $\ell \gg_N 0$ be a sufficiently large prime. 
\begin{enumerate}
\item There is a totally real field $F$ which is solvable over $\Q$
  and infinitely many non-isomorphic Galois representations
\[
\br \colon \Gamma_F \to \mr{SO}_{N+1}(\ov{\F}_{\ell})
\]
such that $\br$ is irreducible as an $\mr{SO}_{N+1}$-valued representation, but reducible as a $\mr{GL}_{N+1}$-valued representation, and $\br$ admits a geometric lift $\rho \colon \Gamma_F \to \mr{SO}_{N+1}(\ov{\Z}_{\ell})$ with Zariski-dense image.
\item Moreover, if we assume that representations
  $\Gamma_{\Q_{\ell}} \to \mr{SO}_{N+1}(\ov{\F}_{\ell})$ admit Hodge--Tate
  regular de Rham lifts
  $\Gamma_{\Q_{\ell}} \to \mr{SO}_{N+1}(\ov{\Z}_{\ell})$, then in part
  (1) we may take $F=\Q$.
\end{enumerate}
\end{thm}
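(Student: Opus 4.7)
The overall strategy is to apply the main lifting theorem of \cite{fkp:reldef} to a class of residual representations obtained by extending Zywina's $\mr{SO}_N$-valued representations to $\mr{SO}_{N+1}$. For $\ell \gg_N 0$, Zywina's work produces representations $\br_0 \colon \Gamma_\Q \to \mr{SO}_N(\F_\ell)$ arising from a piece of the middle $\ell$-adic cohomology of a suitable real elliptic surface $\pi \colon \mc{E} \to \bP^1_\Q$; these are $\mr{GL}_N$-irreducible and have very large image. Composing with the stabilizer embedding $\mr{SO}_N \hookrightarrow \mr{SO}_{N+1}$ (as the stabilizer of a non-isotropic line in the standard $(N+1)$-dimensional quadratic space) gives $\br = \br_0 \oplus \mb{1}$, the residual representation to be lifted.

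That $\br$ is $\mr{SO}_{N+1}$-irreducible is a formality: since $\br_0$ is $\mr{GL}_N$-irreducible, the $\br(\Gamma_\Q)$-invariants on the standard representation of $\mr{SO}_{N+1}$ are exactly the chosen non-isotropic line, which lies in no isotropic subspace; hence the image is not contained in any proper parabolic of $\mr{SO}_{N+1}$. Reducibility as a $\mr{GL}_{N+1}$-valued representation is built into the construction. Infinitely many non-isomorphic examples arise by varying Zywina's input data.

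The main obstacle, and the content of \S\ref{zywinasection}--\S\ref{complexconj}, is the verification of oddness (Definition \ref{defn:odd}) for $\br$. Decomposing the adjoint representation as $\mg{so}_{N+1} = \mg{so}_N \oplus V_N$ under the embedded $\mr{SO}_N$, and noting that $\dim \mr{Flag}_{\mr{SO}_{N+1}} = N^2/4$, a short linear-algebra check shows that oddness for $\br$ is equivalent to the signature of complex conjugation $c$ on $V_N$ being exactly $(N/2, N/2)$---i.e.\ $c$ acting with trace zero on Zywina's piece of $\ell$-adic cohomology. Because $\br_0$ is realized topologically via the comparison with singular cohomology of $\mc{E}(\C)$, the action of $c$ on $V_N$ matches that of the anti-holomorphic involution on the corresponding subspace of $H^2(\mc{E}(\C), \F_\ell)$, and its trace can then be computed from the geometry of $\mc{E}(\R)$ via a Lefschetz-type fixed-point calculation together with an analysis of the intersection form. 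This topological computation is the heart of the argument.

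With oddness in hand, I would invoke the main theorem of \cite{fkp:reldef}: the remaining hypotheses (largeness of image of $\br$ in $\mr{SO}_{N+1}(\F_\ell)$, the existence of suitable liftable local deformation rings at $\ell$ and at primes of bad reduction, and the necessary cohomological vanishings) follow from Zywina's large-image results and standard constructions. For part (1), a solvable totally real base change $F/\Q$ is used to arrange the required local conditions while preserving oddness and $G$-irreducibility. Zariski density of the image of the geometric lift then follows from the Hodge--Tate regularity of the lift together with the largeness of $\br$. Part (2) is immediate: the assumed existence of Hodge--Tate regular de Rham lifts at $\ell$ removes the need to base-change, so one may take $F = \Q$.
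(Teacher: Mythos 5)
Your overall strategy matches the paper's: start from Zywina's $\mr{SO}_N$-valued representations, view them inside $\mr{SO}_{N+1}$ via the stabilizer embedding, verify $\mr{SO}_{N+1}$-irreducibility and $\mr{GL}_{N+1}$-reducibility, compute the trace of complex conjugation topologically on the real elliptic surface to check oddness, and then invoke Theorem~\ref{fkpthmA} after a solvable totally real base change to arrange the local condition at $\ell$; part (2) is indeed immediate once that local condition is assumed.

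However, your reduction of oddness to a trace condition on Zywina's representation $\bar\theta = \vartheta_{\ell,w_i}$ is incorrect, and this error hides a step the paper needs. For $\br = \bar\theta \oplus 1$ valued in $\mr{SO}_{N+1}$ (with $N = 2n$), write $a$ (resp.\ $b$) for the multiplicity of $+1$ (resp.\ $-1$) among the eigenvalues of $\br(c)$, so $a+b = N+1$. Using $\mg{so}_{N+1} \cong \wedge^2(\mr{std})$, one has $h^0(\Gamma_{F_v}, \br(\mg{so}_{N+1})) = \binom{a}{2} + \binom{b}{2}$, and equating this to $\dim\mr{Flag}_{\mr{SO}_{N+1}} = n^2$ gives $(a-n)(a-(n+1)) = 0$, i.e.\ $\tr(\br(c)) = \pm 1$ and hence $\tr(\bar\theta(c)) \in \{0, -2\}$ --- not $\tr(\bar\theta(c)) = 0$ alone, as you claim. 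In fact Lemma~\ref{lem:z} shows that when $N \equiv 2 \pmod 8$ the trace is $-2$ (your criterion would wrongly rule this case out, and it is the most interesting case, where $\bar\theta$ itself is not odd), and in Case $6_\Omega$ the trace is $+2$, so $\bar\theta \oplus 1$ is \emph{not} odd and the paper must instead use $(\delta_{K/\Q} \otimes \bar\theta) \oplus 1$ for a suitable imaginary quadratic character $\delta_{K/\Q}$ to flip the sign. This twist, which your proposal does not address, is a genuine part of the argument. You should also be explicit that ``infinitely many'' specializations with the desired image come from the Hilbert irreducibility step (Proposition~\ref{hilbert}), and that the solvable $F/\Q$ must be chosen linearly disjoint from the fixed fields of all the $\br_{w_i}$ and $\Q(i,\zeta_\ell)$ to preserve both the image hypothesis and irreducibility after restriction.
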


Let us explain the restriction in this theorem that gives us only a conditional result when $F= \Q$. In addition to oddness, the other requirement for producing lifts (via either of the methods described above) of residual representations $\br \colon \Gamma_{F} \to G(\ov{\mathbb{F}}_{\ell})$ is that $\br|_{\Gamma_{F_v}}$ should have lifts at each place $v$ of $F$---of course, this is obvious unless $\br$ is ramified at $v$---and that when $v$ lies above $\ell$ these lifts may be taken to be de Rham and Hodge--Tate regular. It is perhaps to be expected that this is no condition at all, but it is not known in this generality, and unfortunately we have nothing to add at present to this question. In our examples, the most serious obstacle is the existence of Hodge--Tate regular lifts when $v \vert \ell$, and we will mostly avoid this by producing examples over some number field $F$ allowing us to rig the local behavior of our $\br$. That said, in contrast to the examples based on the potential inverse Galois problem,  in the theorem above arising from Zywina's examples  we  could  take $F=\Q$,  if we knew that representations $\Gamma_{\Q_{\ell}} \to \mr{SO}_{N+1}(\F_{\ell})$ admit a Hodge--Tate regular de Rham lift. Recently the techniques for producing such lifts have greatly advanced: work of Emerton--Gee (\cite{emerton-gee:moduli}) addresses this question with $\mr{GL}_{N+1}$ in place of $\mr{SO}_{N+1}$, and others are at work extending the methods of \cite{emerton-gee:moduli} to other groups. In the examples arising from Moret-Bailly's theorem (\S \ref{pIGP}), we cannot necessarily ensure that $F/\Q$ is solvable as in Theorem \ref{thm:main-intro}.

We make some further remarks on the relation of our examples to the
automorphic theory. In \S \ref{pIGP} and \S \ref{zywinasection}, the
representations we construct have the form
$\br= \bar{\theta} \oplus 1$, where $N$ is an even integer, for some
$\bar{\theta} \colon \Gamma_F \to \mr{SO}_N(\F_{\ell})$ with ``large"
image, so $\br$ is valued in $\mr{SO}_{N+1}(\F_{\ell})$ and is
$\mr{SO}_{N+1}$-irreducible but $\mr{GL}_{N+1}$-reducible. We note
that our examples suggest, on the automorphic side, the existence of
congruences between certain non-endoscopic cusp forms on
$\mr{Sp}_{N}(\mathbb{A}_F)$ and certain endoscopic forms associated to
its (split) endoscopic group $\mr{SO}_N$. Moreover, when
$N \equiv 2 \pmod 4$, $\bar{\theta}$ itself is not odd and cannot be
lifted using the methods of \cite{fkp:reldef}, \cite{blggt:potaut}, or
indeed of any hoped-for extension of potential automorphy methods.
Indeed, we expect $\bar{\theta}$ will only have geometric
$\mr{SO}_N(\ov{\Z}_{\ell})$-lifts, if any, with some multiplicity
greater than $1$ in its Hodge--Tate weights (as $\mr{GL}_N$-valued
representation), and heuristically one expects its deformation ring
(parametrizing de Rham lifts with fixed Hodge--Tate data) to be
zero-dimensional. In many cases we prove that $\bar{\theta}$ indeed
has no regular lift in Proposition \ref{distinctHT} (see also Remark
\ref{notodd}).

It remains an interesting open problem to develop potential automorphy
methods, and corresponding lifting theorems, in a more group-intrinsic
manner, the difference between $G$-irreducibility and $\mr{GL}_n$
irreducibility explored in this paper providing one motivation for
doing so. To do this would require further understanding of Langlands
functoriality, e.g., between automorphic representations of inner
forms of $\mr{GSpin}_{2n+1}$ and $\mr{GL}_{2n}$, including
compatibility with local Langlands at ramified primes. Moreover,
allowing representations that are $G$-irreducible but
$\mr{GL}_n$-reducible would force the Taylor--Wiles method to grapple
with contributions from endoscopic groups where, interestingly,
automorphic representations on an endoscopic group that are not
discrete series at infinity can intervene (again see Remark
\ref{notodd})

\section{A theorem from \cite{fkp:reldef}}

We recall a weakening of the main theorem of \cite{fkp:reldef} (the
full result yields more precise conclusions about the local
restrictions and the image of the lift), which will be our input for
lifting residual representations $\br$ once we have established some
of their local and global properties:
\begin{thm}[See Theorem A of \cite{fkp:reldef}]\label{fkpthmA}
Let $\ell \gg_G 0$ be a prime. Let $F$ be a totally real field, and let $\br \colon \Gamma_F \to G(\ov{\F}_{\ell})$ be a continuous representation unramified outside a finite set of finite places $S$ containing the places above $\ell$. Let $\widetilde{F}$ denote the smallest extension of $F$ such that $\br(\Gamma_{\widetilde{F}})$ is contained in $G^0(\ov{\F}_{\ell})$, and assume that  $[\widetilde{F}(\zeta_\ell): \widetilde{F}]$ is strictly greater than an integer $a_G$ depending only on the root datum of $G$ (see \cite[Lemma A.6]{fkp:reldef}). Fix a geometric lift $\mu \colon \Gamma_F \to G/G^{\mr{der}}(\ov{\Z}_{\ell})$ of $\bar{\mu}:=\br \pmod{G^{\mr{der}}}$, and assume that $\br$ satisfies the following:
\begin{itemize}
\item $\br$ is odd.
\item $\br|_{\Gamma_{\widetilde{F}(\zeta_\ell)}}$ is absolutely irreducible.
\item For all $v \in S$, $\br|_{\Gamma_{F_v}}$ has a continuous lift
  $\rho_v \colon \Gamma_{F_v} \to G(\ov{\Z}_{\ell})$  such that
  $ \rho_v   \pmod{G^{\mr{der}}} =  \mu|_{\Gamma_{F_v}} $; and that for $v \vert \ell$ this lift may be chosen to be de Rham and regular in the sense that the associated Hodge--Tate cocharacters are regular.
\end{itemize}
Then there is a lift
\[
\xymatrix{
& G(\ov{\Z}_{\ell}) \ar[d] \\
\Gamma_F \ar[r]_-{\br} \ar[ur]^{\rho} & G(\ov{\F}_{\ell}) 
}
\]
of $\br$ satisfying:
\begin{itemize}
\item The projection of $\rho$ to $G/G^{\mr{der}}(\ov{\Z}_{\ell})$ equals  $\mu$.
\item $\rho$ is unramified outside a finite set of primes, and the restrictions $\rho|_{\Gamma_{F_v}}$ for $v \vert \ell$ are de Rham and regular, having the same $\ell$-adic Hodge type as $\rho_v$.
\item The Zariski-closure of the image $\rho(\Gamma_F)$ contains $G^{\mr{der}}$.
\end{itemize}
\end{thm}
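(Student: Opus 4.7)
The plan is to realize the lift via Galois deformation theory, specifically via a Ramakrishna-type prime-adding argument. Consider the global deformation functor of $\br$ with fixed multiplier $\mu$, and attach to each $v \in S$ a smooth local condition $L_v \subset \h^1(\Gamma_{F_v}, \br(\mg{g}^{\mr{der}}))$ cut out by the given lift $\rho_v$: at $v \mid \ell$ this is the tangent space of the Hodge--Tate regular de Rham local lifting ring at $\rho_v$, and at other $v \in S$ it is the tangent space of a smooth component of the unrestricted local lifting ring through $\rho_v$. Impose the trivial condition at archimedean places, and write $\mc{L}$ and $\mc{L}^\perp$ for the resulting Selmer and Tate-dual Selmer systems on $\br(\mg{g}^{\mr{der}})$ and its Tate twist.

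The Wiles--Greenberg--Neukirch Euler characteristic formula expresses $\dim \h^1_{\mc{L}} - \dim \h^1_{\mc{L}^\perp}$ as a sum over all places of local defects $\dim L_v - \h^0(\Gamma_{F_v}, \br(\mg{g}^{\mr{der}}))$, modulo global $\h^0$ corrections that vanish because $\br|_{\Gamma_{\widetilde{F}(\zeta_\ell)}}$ is absolutely irreducible. Oddness and the existence of Hodge--Tate regular de Rham lifts at $\ell$ are precisely calibrated so that these defects balance cleanly: each real place contributes $-\dim \mr{Flag}_{G^0}$ via oddness, each $v \mid \ell$ contributes $[F_v : \Q_\ell]\dim \mr{Flag}_{G^0}$ via Hodge--Tate regularity of the local lifting ring, and using that $F$ is totally real together with $\sum_{v \mid \ell}[F_v : \Q_\ell] = [F:\Q]$ these exactly cancel. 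Consequently, provided $\h^1_{\mc{L}^\perp}$ can be killed, the corresponding global deformation ring is formally smooth of the appropriate relative dimension over $\ov{\Z}_\ell$, and the desired lift $\rho$---with projection $\mu$ to $G/G^{\mr{der}}$ and the prescribed $\ell$-adic Hodge type at places above $\ell$---falls out by formal smoothness.

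It remains to annihilate $\h^1_{\mc{L}^\perp}$ by enlarging $S$ at auxiliary ``Ramakrishna primes'' $q$. For each nonzero class $\psi \in \h^1_{\mc{L}^\perp}$ one seeks a finite place $q \notin S$ such that $\br(\fr_q)$ lies in a regular semisimple conjugacy class admitting a Ramakrishna-type smooth local deformation condition $L_q$, and such that $\psi|_{\Gamma_{F_q}}$ pairs nontrivially against $L_q$ under local Tate duality; imposing this condition strictly decreases $\dim \h^1_{\mc{L}^\perp}$ while preserving all bookkeeping. Existence of such a prime $q$ is a Chebotarev density statement whose input is the joint image of $\br$ and the mod-$\ell$ cyclotomic character: the absolute irreducibility of $\br|_{\Gamma_{\widetilde{F}(\zeta_\ell)}}$ forces any nonzero $\psi$ to be detectable on some Frobenius class, while the hypothesis $[\widetilde{F}(\zeta_\ell):\widetilde{F}] > a_G$ guarantees that enough good conjugacy classes actually occur in the image. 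The principal obstacle---and the technical heart of \cite{fkp:reldef}---is producing enough ``abundant'' regular semisimple elements in the possibly non-connected image $\br(\Gamma_F)$, each equipped with a compatible Ramakrishna local condition; this structural input, carried out uniformly across reductive $G$, is what pins down the constant $a_G$. Iterating the procedure kills dual Selmer after finitely many steps, and the Zariski-closure assertion about $\rho(\Gamma_F)$ is a byproduct, since the accumulated Ramakrishna primes contribute regular semisimple elements that cannot lie in any proper reductive subgroup of $G$.
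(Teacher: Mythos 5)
The statement you are asked to prove is cited verbatim from \cite{fkp:reldef} (``Theorem A'' there); the paper under review supplies no proof of its own, so your proposal must be measured against the argument of that reference. Your sketch correctly reproduces the skeleton of a Ramakrishna-type lifting argument---Greenberg--Wiles balancing of Selmer and dual Selmer, smooth local conditions at $S$ and at archimedean places, iterated annihilation of $\h^1_{\mc{L}^\perp}$ via auxiliary primes---and the numerology at $\infty$ and at $\ell$ using oddness and Hodge--Tate regularity is right. The gap is in the Chebotarev step, which is precisely the point of \cite{fkp:reldef}.

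You assert that ``the absolute irreducibility of $\br|_{\Gamma_{\widetilde{F}(\zeta_\ell)}}$ forces any nonzero $\psi$ to be detectable on some Frobenius class.'' This is not true under the stated hypotheses. In the classical method (Ramakrishna, and the reductive-group generalizations of Patrikis and Booher), one restricts a dual Selmer class $\psi$ to $\Gamma_K$, where $K$ is the splitting field of $\br$ together with $\zeta_\ell$, and applies Chebotarev there. This requires $\psi|_{\Gamma_K}\neq 0$, which by inflation--restriction forces one to assume $\h^1(\gal(K/F),\br(\mg{g}^{\mr{der}})^*(1))=0$ (and, for the primal Selmer side, $\h^1(\gal(K/F),\br(\mg{g}^{\mr{der}}))=0$). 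Absolute irreducibility of $\br|_{\Gamma_{\widetilde{F}(\zeta_\ell)}}$ gives the $\h^0$-vanishing that enters the Wiles formula, but says nothing about these $\h^1$ groups, which can be nonzero---for instance when $\ell$ divides $\#\br(\Gamma_F)$ or when $\mg{g}^{\mr{der}}$ is not a suitably ``big'' module for the image. The theorem as stated imposes only the irreducibility condition and the lower bound $[\widetilde{F}(\zeta_\ell):\widetilde{F}]>a_G$; neither delivers the $\h^1$-vanishing, so the classical Chebotarev argument does not run.

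The actual proof in \cite{fkp:reldef} is structurally different: rather than imposing image hypotheses strong enough to force these vanishings (the route taken by all prior extensions of Ramakrishna's method, and one which fails for many natural images), the argument there is carried out \emph{relatively}, using auxiliary ``trivial'' primes highly congruent to $1\pmod\ell$ and propagating lifts through a growing tower of coefficient rings, so that no $\h^1(\gal(K/F),\cdot)$-vanishing is ever invoked. Your remark that the heart of the matter is ``producing enough abundant regular semisimple elements'' captures only half of the innovation; the other, indispensable, half is the relative framework itself, which is what makes the stated (and strikingly weak) image hypothesis $[\widetilde{F}(\zeta_\ell):\widetilde{F}]>a_G$ sufficient. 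Without that framework, the proof you outline does not close.
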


\section{An elementary example}\label{elementary}

In this brief section, we consider a relatively simple example where
$G= \mr{GSp}_{2n}$, defined with respect to the symplectic form
$J= \begin{pmatrix} 0 & 1_n \\ -1_n & 0 \end{pmatrix}$, where $1_n$
denotes the $n\times n$ identity matrix. Let
$\br_1, \br_2, \ldots, \br_n \colon \Gamma_\Q \to
\mr{GL}_{2}(\ov{\F}_{\ell})$ be $n$ Galois representations satisfying
\begin{itemize}
\item The $\br_i|_{\Gamma_{\Q(\zeta_{\ell})}}$ are irreducible  and pairwise non-isomorphic.
\item The determinants $\det(\br_i)$ are independent of $i$.
\item The $\br_i$ are all odd, i.e. $\det(\br_i(c))=-1$ for all $i$.
\end{itemize}
Writing $\br_i(g)= \begin{pmatrix} a_i(g) & b_i(g) \\ c_i(g) & d_i(g) \end{pmatrix}$, define
\[
\br(g)= \left( \begin{array}{cccc|cccc}
a_1(g) & 0 & \cdots& 0 & b_1(g) & 0 & \cdots & 0 \\ 0 & a_2(g) & 0 & \cdots & 0 & b_2(g) & 0 & \cdots \\
0 & 0 & \ddots & 0 & 0 & 0 & \ddots & 0\\
0 & \cdots & 0 & a_n(g) & 0 & \cdots & 0 & b_n(g) \\ \hline
c_1(g) & 0 & \cdots& 0 & d_1(g) & 0 & \cdots & 0 \\ 0 & c_2(g) & 0 & \cdots & 0 & d_2(g) & 0 & \cdots \\
0 & 0 & \ddots & 0 & 0 & 0 & \ddots & 0\\
0 & \cdots & 0 & c_n(g) & 0 & \cdots & 0 & d_n(g)
\end{array}
\right).
\]
The condition that $\det(\br_i)$ be independent of $i$ guarantees that
$\br$ is valued in $\mr{GSp}_{2n}$ and has symplectic multiplier
$\det(\br_i)$. Oddness of each $\br_i$ therefore implies oddness of
$\br$. Since the $\br_i|_{\Gamma_{\Q(\zeta_{\ell})}}$ are irreducible,
non-isotropic  and pairwise non-isomorphic, the only
  subspaces invariant under $\br$ are sums of some subset of the
  $\br_i$. It follows that $\br|_{\Gamma_{\Q(\zeta_{\ell})}}$ leaves
  no (nonzero) isotropic subspace invariant, so does not have image
  contained in any proper parabolic subgroup of $\mr{GSp}_{2n}$. Thus
$\br|_{\Gamma_{\Q(\zeta_{\ell})}}$ is $\mr{GSp}_{2n}$-irreducible,
while obviously not $\mr{GL}_{2n}$-irreducible.
\begin{prop}
Assume $\ell \gg_n 0$. With notation as above, $\br$ admits a Hodge--Tate regular de Rham lift $\rho \colon \Gamma_{\Q} \to \mr{GSp}_{2n}(\ov{\Z}_{\ell})$ with Zariski-dense image.
\end{prop}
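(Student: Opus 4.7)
The plan is to verify the hypotheses of Theorem \ref{fkpthmA} for $G = \mr{GSp}_{2n}$, $F = \Q$, and $\br$ the representation constructed above, and then read off the conclusion. Since $\mr{GSp}_{2n}$ is connected we have $\widetilde{F}=\Q$, so the global hypotheses are exactly: oddness of $\br$, $\mr{GSp}_{2n}$-irreducibility of $\br|_{\Gamma_{\Q(\zeta_\ell)}}$, and the existence of a geometric lift $\mu$ of $\bar{\mu} = \det(\br_i)$. The first two points have been established above; for $\mu$, take any continuous geometric lift of $\det(\br_i)$, for instance an integer power of the $\ell$-adic cyclotomic character multiplied by a suitable finite-order character. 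Let $S$ be the union of $\{v \mid \ell\}$ and the primes at which $\br$ or $\mu$ ramifies.

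It remains to supply, for each $v \in S$, a lift $\rho_v \colon \Gamma_{\Q_v} \to \mr{GSp}_{2n}(\ov{\Z}_\ell)$ of $\br|_{\Gamma_{\Q_v}}$ whose similitude character is $\mu|_{\Gamma_{\Q_v}}$, and which is de Rham and Hodge--Tate regular when $v = \ell$. For $v \neq \ell$ in $S$, each $\br_i|_{\Gamma_{\Q_v}}$ admits a $\mr{GL}_2(\ov{\Z}_\ell)$-valued lift with determinant $\mu|_{\Gamma_{\Q_v}}$ by standard lifting results for $\mr{GL}_2$, and assembling these diagonally as in the definition of $\br$ produces the required $\rho_v$. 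For $v = \ell$, let $k$ denote the Hodge--Tate weight of $\mu$. Choose integers $a_1, \ldots, a_n$ such that the multiset $\{a_1, \ldots, a_n, k - a_1, \ldots, k - a_n\}$ consists of $2n$ distinct integers (so $a_i \neq a_j$, $a_i + a_j \neq k$, and $2a_i \neq k$), small relative to $\ell$. For $\ell$ sufficiently large, one can lift each $\br_i|_{\Gamma_{\Q_\ell}}$ to a crystalline $\mr{GL}_2$-valued representation with determinant $\mu|_{\Gamma_{\Q_\ell}}$ and Hodge--Tate weights $\{a_i, k - a_i\}$; this is standard, e.g., via Fontaine--Laffaille theory. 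Diagonally assembling as before yields a de Rham $\rho_\ell$ whose Hodge--Tate cocharacter, read in the standard maximal torus of $\mr{GSp}_{2n}$, has the prescribed $2n$ distinct weights, hence is regular in $\mr{GSp}_{2n}$.

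All hypotheses of Theorem \ref{fkpthmA} being in place, we obtain a lift $\rho \colon \Gamma_\Q \to \mr{GSp}_{2n}(\ov{\Z}_\ell)$ of $\br$ which is de Rham and Hodge--Tate regular at $\ell$, whose projection to $\mr{GSp}_{2n}/\mr{Sp}_{2n}$ is $\mu$, and whose Zariski closure contains $\mr{GSp}_{2n}^{\mr{der}} = \mr{Sp}_{2n}$. Choosing $\mu$ to have infinite image (which is possible since $\mu$ can involve the cyclotomic character), the Zariski closure of $\rho(\Gamma_\Q)$ is all of $\mr{GSp}_{2n}$. The main delicate point is the $\ell$-adic local step: one must arrange the pairs of Hodge--Tate weights for the individual $\br_i$ to fit together into a regular cocharacter of $\mr{GSp}_{2n}$ (not just of $\mr{GL}_{2n}$), and must produce $\mr{GL}_2$-valued crystalline lifts realizing these weights with the prescribed determinant; both are routine for $\ell \gg_n 0$, which is why this is labeled an elementary example.
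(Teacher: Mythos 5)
Your overall plan matches the paper's: verify the hypotheses of Theorem \ref{fkpthmA} for $G=\mr{GSp}_{2n}$, $F=\Q$. The irreducibility and oddness steps are fine, and the remark that $\mr{GSp}_{2n}$-regularity of the Hodge--Tate cocharacter amounts to the $2n$ weights being distinct is correct. The gap is in the local step at $\ell$, and it stems from the order in which you fix things.

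You fix the global similitude character $\mu$ first and then try to produce crystalline lifts of each $\br_i|_{\Gamma_{\Q_\ell}}$ with prescribed Hodge--Tate weights \emph{and} determinant equal to $\mu|_{\Gamma_{\Q_\ell}}$, citing Fontaine--Laffaille theory as making this "standard." This is not correct in the stated generality: $\br_i|_{\Gamma_{\Q_\ell}}$ is arbitrary, and it need not be Fontaine--Laffaille (it may be wildly ramified, for instance), so a \emph{crystalline} lift with small Hodge--Tate weights need not exist. What one can produce without further hypotheses on $\br_i|_{\Gamma_{\Q_\ell}}$ is a \emph{potentially} crystalline lift, which is what the paper does (Lemma \ref{local}, citing Muller's thesis, which treats the irreducible and the extension-of-two-characters cases separately). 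Potentially crystalline is de Rham, which is all Theorem \ref{fkpthmA} needs. The paper also reverses your order: it first builds local lifts $\rho_{i,\ell}$ with a \emph{common} determinant and jointly distinct Hodge--Tate weights (this requires only flexibility in the weights, not in the determinant), and only then globalizes that common local character to a global $\mu$ by Kronecker--Weber. Prescribing $\mu$ in advance, as you do, forces a stronger local lifting statement than is available.

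A secondary, non-fatal difference: for $v\neq\ell$ you lift each $\br_i|_{\Gamma_{\Q_v}}$ individually by "$\mr{GL}_2$ lifting" and sum; the paper instead applies Booher's fixed-multiplier result to $\br|_{\Gamma_{\Q_v}}$ as a $\mr{GSp}_{2n}$-valued representation in one stroke. Your route works and is more elementary, but do note it needs the $\mr{GL}_2$ lifts to have a prescribed determinant, which is fine for $\mr{GL}_2$ but is an extra thing to check, whereas Booher's theorem bakes the multiplier in.
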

\begin{proof}
From the above discussion, this will now follow from Theorem \ref{fkpthmA} provided we can check the local hypotheses in that theorem. For this, we note that by Lemma \ref{local} below each $\br_i|_{\Gamma_{\Q_{\ell}}}$ admits a potentially crystalline lift $\rho_{i, \ell}$ satisfying:
\begin{itemize}
\item $\det(\rho_{i, \ell})$ is independent of $i$.
\item The Hodge--Tate weights of each $\rho_{i, \ell}$ are distinct and
  also distinct as we vary $i$.
\end{itemize}
Let $\mu \colon \Gamma_{\Q} \to \ov{\Z}_{\ell}^\times$ be a global character such that $\mu|_{\Gamma_{\Q_{\ell}}}= \det(\rho_{i, \ell})$. Such a $\mu$ exists either by the local and global Kronecker--Weber theorems or by noting that $\det(\rho_{i, \ell})$ is an integer power of the cyclotomic character multiplied by a finite-order character; this reduces to the case of finite-order characters, where we can realize any finite extension of $\Q_{\ell}$ as a completion of a finite extension of $\Q$. For such a $\mu$, the local hypothesis at $\ell$ of Theorem \ref{fkpthmA} is satisfied by the choice of similitude character $\mu$ and the local lift $\rho_{\ell}$ constructed by summing the $\rho_{i, \ell}$ in the same way as $\br$ is defined in terms of the $\br_i$. For primes $p \neq \ell$ where $\br$ ramifies, \cite[Theorem 1.1]{booher:minimal} implies that $\br|_{\Gamma_{\Q_p}}$ admits a lift $\Gamma_{\Q_p} \to \mr{GSp}_{2n}(\ov{\Z}_{\ell})$ with similitude character $\mu$. We now conclude by Theorem \ref{fkpthmA}.
\end{proof}
Here is the local lemma used in the proof:
\begin{lem}\label{local}
Let $\br_1, \br_2, \ldots, \br_n \colon \Gamma_{\Q_{\ell}} \to \mr{GL}_2(\ov{\F}_{\ell})$ be continuous representations with the same determinant $\bar{\tau}= \det(\br_i)$, $i=1, \ldots, n$. Then there exist potentially crystalline lifts $\rho_i \colon \Gamma_{\Q_{\ell}} \to \mr{GL}_2(\ov{\Z}_{\ell})$ such that the union of the Hodge--Tate weights of the $\rho_i$ is a set with $2n$ distinct elements, and $\det(\rho_i)$ is independent of $i$.
\end{lem}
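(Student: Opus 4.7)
The plan is, after fixing a common determinant lift, to construct each $\rho_i$ separately using standard local lifting results for $\mr{GL}_2$. First, choose a Hodge--Tate lift $\tau$ of $\bar\tau$ of large Hodge--Tate weight $w$: take any lift $\tau'$ of $\bar\tau$ and set $\tau := \tau' \cdot \chi_{\mr{cyc}}^{N(\ell-1)}$, which has the same reduction (since $\bar\omega^{\ell-1}=1$) but weight increased by $N(\ell-1)$, arbitrarily large by choice of $N$. Then set $k_i = i-1$ for $i = 1, \ldots, n$ and choose $w > 2(n-1)$; the $2n$ integers $\{k_i, w-k_i\}_{i=1}^{n}$ are then distinct, and each pair sums to $w$.

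It remains, for each $i$, to produce a potentially crystalline lift $\rho_i$ of $\br_i$ with determinant $\tau$ and Hodge--Tate weights $\{k_i, w-k_i\}$. This is a standard local lifting problem for $\mr{GL}_2$ with fixed determinant, handled by case analysis. If $\br_i$ is absolutely irreducible, write $\br_i = \mr{Ind}_{\Gamma_K}^{\Gamma_{\Q_\ell}} \bar\psi_i$ for $K$ a quadratic extension of $\Q_\ell$, and lift $\bar\psi_i$ to a Hodge--Tate character $\psi_i$ with appropriately chosen weights in each of the two embeddings $K \hookrightarrow \ov\Q_\ell$, together with a finite-order twist to ensure $\det(\mr{Ind}\, \psi_i) = \tau$; by local class field theory these constraints on $\psi_i$ are simultaneously satisfiable. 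If $\br_i$ is reducible, write it as an extension of $\bar\chi_{i,1}$ by $\bar\chi_{i,2}$, lift these to characters $\chi_{i,j}$ with Hodge--Tate weights $k_i, w-k_i$ and product $\tau$, and then lift the residual extension class to a crystalline extension class in $H^1_f(\Gamma_{\Q_\ell}, \chi_{i,1}\chi_{i,2}^{-1})$.

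The main technical point is the last: the image of $H^1_f \to H^1(\Gamma_{\Q_\ell}, \bar\chi_{i,1}\bar\chi_{i,2}^{-1})$ must contain the prescribed residual class. Since we have considerable freedom in the choice of $k_i$ (modulo making the $2n$ weights distinct), we may arrange that $\chi_{i,1}\chi_{i,2}^{-1}$ avoids the exceptional Tate-twist cases; then for $\ell$ sufficiently large relative to $n$, the required surjectivity follows from routine Bloch--Kato dimension computations via Tate local duality, and the crystalline extension so produced gives the desired $\rho_i$.
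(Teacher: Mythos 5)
Your overall strategy (fix a common determinant lift, then handle each $\br_i$ by case analysis on irreducible vs.\ reducible) is sound and close in spirit to the paper's, but the paper discharges the hard local step by citing Muller's thesis \cite[Th\'{e}or\`{e}me 2.5.3, Theorem 2.5.4]{muller:thesis}, whereas you try to prove it directly --- and there is a genuine gap in your treatment of the reducible case. The claim that the image of $H^1_f(\Gamma_{\Q_\ell}, \chi_{i,1}\chi_{i,2}^{-1}) \to H^1(\Gamma_{\Q_\ell}, \bar\chi_{i,1}\bar\chi_{i,2}^{-1})$ contains the prescribed residual extension class does not ``follow from routine Bloch--Kato dimension computations.'' Comparing dimensions tells you both sides are (say) one-dimensional in favorable cases, but gives no information about whether the reduction map is nonzero: $H^1_f$ is an $\ov\Z_\ell$-submodule of $H^1$, and it could a priori sit inside $\ell H^1$, killing the image. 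Establishing this surjectivity is exactly the nontrivial content of the local lifting theorem you are trying to reprove, and it requires a finer argument (e.g.\ exhibiting explicit crystalline classes via Kummer theory / Lubin--Tate characters, or a deformation-theoretic smoothness argument), which is what Muller's thesis carries out.

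A second problem: you write ``we may arrange that $\chi_{i,1}\chi_{i,2}^{-1}$ avoids the exceptional Tate-twist cases,'' but the troublesome situations are determined residually --- $\bar\chi_{i,1}\bar\chi_{i,2}^{-1}$ being trivial or cyclotomic mod $\ell$ is a property of $\br_i$ and cannot be engineered away by choosing the characteristic-zero lifts $\chi_{i,j}$ or the integers $k_i$. These are precisely the cases (peu/tr\`{e}s ramifi\'{e}e for $\bar\chi = \bar\omega$, nontrivial self-extensions for $\bar\chi = 1$) where the Selmer-group analysis is delicate, and they need to be confronted, not sidestepped. Relatedly, you ask for a \emph{crystalline} extension class, but for the full generality of the statement one needs \emph{potentially crystalline}; the extra ramification is important flexibility in the exceptional cases. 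The irreducible case via induction from a quadratic extension and the determinant-normalization step are fine. If you do not want to cite Muller, the cleanest repair is to invoke a black-box local lifting result (such as those in the Fontaine--Laffaille range, or Gee--Herzig--Savitt, or the Emerton--Gee stacks framework) rather than asserting the $H^1_f$ surjectivity as routine.
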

\begin{proof}
First we note that it suffices to produce potentially crystalline lifts $\rho_i$ with (all taken together) distinct Hodge--Tate weights and $\det(\rho_i)$ having the same (single) Hodge--Tate weight for all $i$. Indeed, then each quotient $\det(\rho_i)/\det(\rho_1)$ is a finite-order character valued in a pro-$\ell$ group (since the reduction mod $\ell$ is trivial), and so we can extract a square root and twist $\rho_i$ to have the same determinant as $\rho_1$ (and this finite-order twist does not affect the property of being potentially crystalline). To finish the proof, we apply \cite[Th\'{e}or\`{e}me 2.5.3, Theorem 2.5.4]{muller:thesis}, which show the following:
\begin{itemize}
\item If $\br_i$ is irreducible, then for any choice of Hodge--Tate weights $\{m_{i, 1}, m_{i, 2}\}$, there exists a potentially crystalline lift $\rho_i$ of $\br_i$ with these weights. We choose these in such a way that $m_{i, 1}+m_{i, 2}$ is independent of $i$, but the multi-set $\{m_{i, j}\}_{i, j}$ is in fact a set.
\item If $\br_i= \begin{pmatrix} \bar{\chi}_{i, 1} & * \\ 0 & \bar{\chi}_{i, 2} \end{pmatrix}$ is an extension of characters, then for each $i$ there are potentially crystalline lifts $\chi_{i, j}$ of $\bar{\chi}_{i, j}$ with Hodge--Tate weights summing to any pre-specified value and a potentially crystalline lift of $\br_i$ that is an extension of $\chi_{i, 2}$ by $\chi_{i, 1}$. (To see this claim requires inspection of the proof of \cite[Th\'{e}or\`{e}me 2.5.4]{muller:thesis}.)
\end{itemize}
\end{proof}
\section{Approach via the potential IGP with local conditions}\label{pIGP}
The general approach of the present section is quite flexible and will
certainly apply to other Galois images and target groups than those
used here; we do not strive for maximal generality. Let $N=2n \geq 4$
be an even integer, and consider the standard embedding
$\mr{SO}_N \to \mr{SO}_{N+1}$ of special orthogonal groups over
$\Z_{\ell}$ (defined, for definiteness, with respect to the symmetric
pairings given by the identity matrices\footnote{
    $\mr{SO}_N$ may not be split over $\Z_{\ell}$, but $\mr{SO}_{N+1}$
    is split since $N+1$ is odd.
  Many variants on the present construction are possible; in particular, we could take different forms of $\mr{SO}_N$.}). Let $\Gamma= \mr{SO}_N(\F_{\ell})$, and fix the following order two element $c_{\infty}$ of $\Gamma$:
\begin{itemize}
\item If $N \equiv 0 \pmod 4$, then $c_\infty= \begin{pmatrix} 0 & 1_n \\ 1_n & 0 \end{pmatrix}$ (note that $\det(c_{\infty})= (-1)^{n}=1$).
\item If $N \equiv 2 \pmod 4$, then $c_\infty= \left( \begin{array}{cc|cc} 0_{n-1} & 0 & 1_{n-1} & 0 \\
0 & -1 & 0 & 0 \\ \hline
1_{n-1} & 0 & 0_{n-1} & 0 \\
0 & 0 & 0 & -1
\end{array} \right)$ (note that $\det(c_{\infty})= (-1)^{n-1}= 1$).
\end{itemize}
We now circumvent the inverse Galois problem by applying
Moret-Bailly's solution (\cite{moret-bailly}) to the potential inverse
Galois problem with local conditions, and we thus produce the examples
of this section:
\begin{prop}\label{pIGPprop}
There exists a totally real field $F/\Q$ and a Galois extension $K/F$ satisfying:
\begin{enumerate}
\item There is an isomorphism $\bar{\theta} \colon \gal(K/F) \xrightarrow{\sim} \Gamma$.
\item $K/\Q$ is linearly disjoint from $\Q(\zeta_{\ell})/\Q$.
\item For all complex conjugations $c_v \in \gal(K/F)$ with
  $v \vert \infty$, $\bar{\theta}(c_v)$ is conjugate to $c_{\infty}$.
\item The prime $\ell$ splits completely in $F/\Q$, and for all places $v \vert \ell$ of $F$, and all $w \vert v$ of $K$, $K_w=F_v$ is the trivial extension.
\end{enumerate}
Assume now $\ell \gg_N 0$, and define $\br= \bar{\theta} \oplus 1$. Then $\br \colon \Gamma_F \to \mr{SO}_{N+1}(\F_{\ell})$ has a Hodge--Tate regular geometric lift $\rho \colon \Gamma_F \to \mr{SO}_{N+1}(\ov{\Z}_{\ell})$ with image Zariski-dense in $\mr{SO}_{N+1}$.
\end{prop}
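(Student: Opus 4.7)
The proof splits into two stages: construction of the extension $K/F$ via Moret-Bailly's theorem, then verification of the hypotheses of Theorem \ref{fkpthmA} for $\br = \bar{\theta} \oplus 1$.

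For the first stage, the plan is to apply Moret-Bailly's theorem on the potential inverse Galois problem with local conditions (\cite{moret-bailly}) to the finite group $\Gamma$, imposing at each archimedean place of $\Q$ the local $\Gamma$-extension in which $\gal(\C/\R)$ maps to $\langle c_\infty \rangle \subset \Gamma$ by sending complex conjugation to $c_\infty$, and at $\ell$ the trivial local extension. Both conditions are realizable (the first because $c_\infty$ has order $2$, the second tautologically), so Moret-Bailly produces a totally real field $F/\Q$ and a surjection $\bar{\theta} \colon \Gamma_F \twoheadrightarrow \Gamma$, whose kernel cuts out $K$, satisfying (1), (3), and (4). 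Condition (2) is then automatic from (4): complete splitting of $\ell$ in $K$ forces $K \cap \Q(\zeta_\ell) = \Q$ (any nontrivial subfield of $\Q(\zeta_\ell)$ being ramified at $\ell$), and linear disjointness over $\Q$ follows since $\Q(\zeta_\ell)/\Q$ is Galois.

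Turning to the hypotheses of Theorem \ref{fkpthmA}: since $\mr{SO}_{N+1}$ is connected, $\widetilde{F} = F$, and $[F(\zeta_\ell):F] = \ell - 1 > a_G$ for $\ell \gg_N 0$, by the same ramification argument. For $\mr{SO}_{N+1}$-irreducibility of $\br|_{\Gamma_{F(\zeta_\ell)}}$: linear disjointness ensures $\bar{\theta}|_{\Gamma_{F(\zeta_\ell)}}$ remains surjective onto $\Gamma$; since the standard representation of $\Gamma$ on $V_N$ is absolutely irreducible for $\ell$ large, the only proper nonzero invariant subspaces of $V_{N+1} = V_N \oplus V_1$ are $V_N$ and $V_1$, neither of which is isotropic for the symmetric form. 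For oddness at a real place $v$, use the identification $\mg{so}_{N+1} \cong \Lambda^2 V_{N+1}$ and decompose $V_{N+1} = V_{N+1}^+ \oplus V_{N+1}^-$ into $c_v$-eigenspaces; writing $d_{\pm} = \dim V_{N+1}^{\pm}$, one finds $h^0(\mr{Ad}(\br)(c_v)) = \binom{d_+}{2} + \binom{d_-}{2}$, and a direct reading of the explicit $c_\infty$ (remembering that $V_1$ lies in $V_{N+1}^+$) shows $\{d_+, d_-\} = \{n, n+1\}$ in both parity cases, so $h^0 = n^2 = \dim \mr{Flag}_{\mr{SO}_{2n+1}}$.

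For the local lift hypothesis, the main case is $v \mid \ell$, where $\br|_{\Gamma_{F_v}}$ is trivial by (4): take $\rho_v$ to factor through a maximal split torus $T \cong \G_m^n$ of $\mr{SO}_{N+1}$ via $n$ characters $\chi_{\mr{cyc}}^{k_i(\ell - 1)}$ with the $k_i$ pairwise distinct positive integers; these reduce trivially mod $\ell$ and yield a crystalline, Hodge--Tate regular lift with weights $\{0, \pm k_i(\ell - 1)\}$. For ramified primes $v \nmid \ell$, lifts are provided by Booher's minimal-lifting theorem (\cite{booher:minimal}). Theorem \ref{fkpthmA} then produces the desired $\rho$ with Zariski-dense image. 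The principal obstacle lies in the Moret-Bailly step: one must simultaneously enforce the totally real property together with all the prescribed local behaviors; once $F$ and $K$ are in hand, the remaining verifications (oddness, irreducibility, local lifts) are essentially direct.
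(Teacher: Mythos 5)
Your proof is correct and follows essentially the same route as the paper: construct $K/F$ via Moret-Bailly's theorem with local constraints at the archimedean places and at $\ell$, then verify the hypotheses of Theorem \ref{fkpthmA} using the non-isotropy of $V_N$ and $V_1$ for irreducibility, Booher's theorem at ramified primes away from $\ell$, a power of cyclotomic composed with a regular cocharacter at $\ell$ (the paper packages this as Lemma \ref{lem:triv}), and a direct oddness computation. The one mild cosmetic difference is the oddness check: the paper computes $\tr(\mathrm{Ad}(\br)(c))$ via the identity $\tr(\Lambda^2 A) = (\tr(A)^2 - \tr(A^2))/2$ and compares against $-\mathrm{rk}(\mathrm{SO}_{N+1})$, whereas you decompose $V_{N+1}$ into $c_\infty$-eigenspaces of dimensions $\{n, n+1\}$ and evaluate $h^0 = \binom{d_+}{2} + \binom{d_-}{2} = n^2$ directly; both give the same answer, and your version is perhaps a bit more transparent about why the switch to $\mathrm{SO}_{N+1}$ restores oddness. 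Your observation that condition (2) follows from (4) (since $K/\Q$ is unramified at $\ell$ and any nontrivial subfield of $\Q(\zeta_\ell)$ is ramified there) is a fine simplification over asking Moret-Bailly for it directly.
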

\begin{proof}
Existence of $K$, $F$, and $\bar{\theta}$ follow immediately from \cite{moret-bailly}. To finish the proof, we must verify that the hypotheses of Theorem \ref{fkpthmA} are satisfied for $\br= \bar{\theta} \oplus 1$. A maximal (proper) parabolic subgroup of $\mr{SO}_{N+1}$ is the stabilizer of an isotropic subspace $W \subset \ov{\F}_{\ell}^{N+1}$. Since the image of $\bar{\theta}|_{\Gamma_{\Q(\zeta_{\ell})}}$ is $\mr{SO}_{N}(\F_{\ell})$, which acts absolutely irreducibly in its standard $N$-dimensional representation for $\ell \gg_N 0$, $\br$ stabilizes exactly two proper subspaces of $\ov{\F}_{\ell}^{N+1}$, namely the space of $\bar{\theta}$ and the complementary line. Clearly neither of these subspaces is isotropic, so in all cases $\br|_{\Gamma_{\Q(\zeta_{\ell})}}$ is $\mr{SO}_{N+1}$-absolutely irreducible. 

Booher's result (\cite[Theorem 1.1]{booher:minimal}) shows that for
$v$ not above $\ell$ at which $\br$ is ramified, $\br|_{\Gamma_{F_v}}$
has a lift to $\Gamma_{F_v} \to \mr{SO}_{N+1}(\ov{\Z}_{\ell})$. (Note
that Booher's result shows a $\mr{GO}_{N+1}$-deformation ring with
fixed orthogonal multiplier is formally smooth of suitably large
dimension; we fix the multiplier to be trivial to produce an
$\mr{O}_{N+1}= \mr{SO}_{N+1} \times \{\pm 1\}$ lift and then it is
clear that the image must lie in $\mr{SO}_{N+1}$.) Since the
extensions $K_w/F_v$ are trivial for $v \vert \ell$,
 $\br|_{\Gamma_{F_v}}$ has a crystalline Hodge--Tate regular lift by
Lemma \ref{lem:triv}. To conclude, we check
that $\br$ is odd. Indeed, since the adjoint representation of
$\mr{SO}_{N+1}$ is isomorphic to the second exterior power of the
standard representation, we find
\[
\tr(\br(c)|_{\mg{so}_{N+1}})= \frac{\tr(\br(c))^2- \tr(\br(c^2))}{2}= \frac{1-(N+1)}{2}= -\mr{rk}(\mr{SO}_{N+1}).
\]
This is equivalent to the oddness condition $\dim(\mg{so}_{N+1}^{\mr{Ad}(\br(c))=1})= \dim \mr{Flag}_{\mr{SO}_{N+1}}$ assumed in Theorem \ref{fkpthmA}.
\end{proof}

  \begin{lem} \label{lem:triv}%
  Let $K/\Q_p$ be a finite extension, $G$ a split reductive group over
  $\Z_p$ and $\br:\Gamma_K \to G(\F_p)$ the trivial
  representation. Then $\br$ has a Hodge--Tate regular crystalline
  lift $\rho:\Gamma_K \to G(\Z_p)$.
\end{lem}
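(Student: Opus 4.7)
The plan is to take $\rho$ of the form $\mu \circ \chi$, where $\mu \colon \G_m \to G$ is a suitably chosen cocharacter defined over $\Z_p$ and $\chi \colon \Gamma_K \to \Z_p^{\times}$ is a crystalline character whose mod-$p$ reduction is trivial. Choosing $\mu$ regular will force Hodge--Tate regularity on $\rho$, while triviality of $\chi \bmod p$ ensures that $\rho$ is a genuine lift of the trivial representation $\br$; the two ingredients compose cleanly to produce the desired lift.

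First I would fix a split maximal torus $T \subset G$ over $\Z_p$, which exists since $G$ is split reductive over $\Z_p$, and choose a regular cocharacter $\mu \in X_*(T)$, i.e., one satisfying $\langle \alpha, \mu \rangle \neq 0$ for every root $\alpha$ of $(G,T)$. Such a $\mu$ exists because the complement of the finitely many root hyperplanes in $X_*(T) \otimes \R$ is a dense open cone, so certainly contains integral points. For $\chi$ I would take $\chi = \chi_{\mr{cyc}}^{p-1}$, where $\chi_{\mr{cyc}} \colon \Gamma_K \to \Z_p^{\times}$ is the restriction of the $p$-adic cyclotomic character; since the reduction $\chi_{\mr{cyc}} \bmod p$ lands in $\F_p^{\times}$, a group of order $p-1$, the character $\chi$ takes values in $1 + p\Z_p$.

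It then remains to verify the claimed properties of $\rho = \mu \circ \chi$. Triviality of $\rho \bmod p$ is immediate from $\chi \bmod p = 1$. Crystallinity is preserved by post-composition of crystalline characters with algebraic morphisms $\G_m \to G$, as one sees after composing with a faithful representation $G \hookrightarrow \mr{GL}_n$, under which $\rho$ becomes a direct sum of integer powers of $\chi_{\mr{cyc}}$, all of which are crystalline. Finally, the Hodge--Tate cocharacter of $\rho$ over any embedding $\tau \colon K \hookrightarrow \ov{\Q}_p$ is $(p-1)\mu$, which remains regular because $\langle \alpha, (p-1)\mu \rangle = (p-1)\langle \alpha, \mu \rangle \neq 0$ for every root $\alpha$. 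There is no serious obstacle here; the argument is entirely uniform in $K$ and $p$, and the only mildly delicate point is ensuring that a regular cocharacter exists over $\Z_p$ rather than merely over $\Q_p$, which is automatic once $G$ is split over $\Z_p$.
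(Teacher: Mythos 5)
Your proposal is essentially identical to the paper's proof: both take $\rho = \mu \circ \kappa_p^{p-1}$ for a regular cocharacter $\mu$, using that $\kappa_p^{p-1}$ is crystalline and trivial mod $p$. You have simply spelled out in more detail the existence of the regular cocharacter and the verification of crystallinity and regularity, which the paper leaves implicit.
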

\begin{proof}
  The $p$-adic cyclotomic character $\kappa_p:\Gamma_K \to \Z_p^*$ is
  crystalline. Therefore, so is $(\kappa_p)^{p-1}$ and it is moreover
  trivial modulo $p$. If $\chi:\G_m \to G$ is any regular cocharacter,
  then we may clearly take $\rho$ to be $\chi \circ (\kappa_p)^{p-1}$.
\end{proof}

\begin{rem}\label{notodd}
Perhaps the most interesting case here is when $N \equiv 2 \pmod 4$. Then the representations $\bar{\theta}$ are not odd, and indeed cannot be odd since $-1$ does not belong to the Weyl group of $\mr{SO}_N$ for any $N \equiv 2 \pmod 4$.\footnote{We learned from Wushi Goldring the observation that an $\mr{SO}_N$-representation that is not odd can transfer to an odd $\mr{SO}_{N+1}$-representation.} Existing lifting techniques cannot lift them to geometric $\mr{SO}_N$-representations, and we expect that no such lifts that are Hodge--Tate regular as $\mr{GL}_N$-representations can exist. See Proposition \ref{distinctHT} below for a proof of this in some cases, but we now explain a general heuristic. Such a lift (necessarily $\mr{GL}_N$-irreducible) would conjecturally arise from an automorphic representation $\pi$ of (split) $\mr{SO}_N/\Q$ with cuspidal transfer to $\mr{GL}_N$. The archimedean L-parameter $\mr{rec}_{\pi_{\infty}} \colon W_{\R} \to \mr{SO}_N(\C)$ would then (by archimedean purity for $\mr{GL}_N$) restrict to $\C^\times \subset W_{\R}$ as $\mr{rec}_{\pi_{\infty}}(z)= z^{\mu} \cdot \bar{z}^{-\mu}$ for some cocharacter $\mu$ of the diagonal torus $T \subset \mr{SO}_N$. Let $\{e_i^*\}_{i=1}^n$ be the standard basis of $X_*(T)$. Then the most regular situation that can arise has $\mr{rec}_{\pi_{\infty}}(j)$ equal to the element $c_{\infty}$ above, and $\mu= \sum_{i=1}^n p_i e_i^*$ with $p_1, \ldots, p_{n-1}$ distinct, and $p_n=0$ (using the Weil group relation $jzj^{-1}=\bar{z}$). Such an L-parameter is in fact $\mr{SO}_N$-regular, but it is clearly not $\mr{GL}_N$-regular. We note that these ``most regular" lifts that might be possible would be the $\ell$-adic representations associated to cuspidal automorphic representations on (suitable forms of) $\mr{SO}_N$ that are non-degenerate limits of discrete series at archimedean places. See \cite{goldring-koskivirta:stratahasse} for important recent progress on the problem of associating Galois representations to automorphic representations that are non-degenerate limits of discrete series at infinity.
\end{rem}
We can unconditionally rule out ``many" candidates for regular lifts of $\bar{\theta}$ using potential automorphy theorems:
\begin{prop}\label{distinctHT}
Let $\rho \colon \Gamma_{\Q} \to \mr{O}_N(\ov{\Z}_{\ell})$ for $\ell>2(2N+1)$ be a continuous representation satisfying:
\begin{enumerate}
\item $\rho$ is unramified outside a finite set of primes.
\item $\br|_{\Gamma_{\Q(\zeta_{\ell})}}$ is $\mr{GL}_N$-irreducible.
\item $\rho|_{\Gamma_{\Q_\ell}}$ is as $\mr{GL}_N$-representation potentially diagonalizable in the sense of \cite[\S 1.4]{blggt:potaut} and has distinct Hodge--Tate weights.
\end{enumerate}
Then $\tr(\rho(c))=0$. In particular, when $N \equiv 2 \pmod 4$, $\rho$ cannot factor through $\mr{SO}_N$.
\end{prop}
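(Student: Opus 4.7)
The plan is to apply the potential automorphy theorems of \cite{blggt:potaut} to $\rho$, and then to read off the trace constraint from the archimedean $L$-parameter of the resulting automorphic representation. Since $\rho$ is $\mr{O}_N$-valued, it is self-dual of orthogonal type; together with hypotheses (2) and (3), and the bound $\ell > 2(2N+1)$ ensuring the adequacy of the residual image, this places us squarely in the setting of \cite{blggt:potaut}.

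The first step is to invoke the main potential automorphy theorem of \cite{blggt:potaut} (after, if necessary, a preliminary descent via a CM quadratic extension) to produce a totally real Galois extension $L/\Q$ and a regular algebraic essentially self-dual cuspidal (RAESDC) automorphic representation $\pi$ of $\mr{GL}_N(\A_L)$ of orthogonal type, with $\rho_{\pi,\ell} \cong \rho|_{\Gamma_L}$. The next step is to analyze the archimedean $L$-parameter $\phi_{\pi_v} \colon W_\R \to \mr{GL}_N(\C)$ at a real place $v$ of $L$. Regular algebraicity and self-duality force $\phi_{\pi_v}$ to decompose as a direct sum of $2$-dimensional discrete series parameters $\mr{Ind}_{W_\C}^{W_\R}(z \mapsto z^{a_i}\bar z^{-a_i})$ (with distinct positive integers $a_i$), together with possibly some $1$-dimensional self-dual characters of $W_\R$. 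Any such $1$-dimensional summand has Hodge--Tate weight $0$, so the distinctness assumption permits at most one; but the parity of $N$ forces the number of $1$-dimensional summands to be even, and hence zero. Each $2$-dimensional summand sends $j$ to an antidiagonal matrix of trace $0$, so $\tr \phi_{\pi_v}(j) = 0$.

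By local--global compatibility at infinity for RAESDC representations over totally real fields (as recorded in \cite{blggt:potaut}), $\tr\bigl(\rho|_{\Gamma_L}(c_v)\bigr) = \tr\phi_{\pi_v}(j) = 0$ for any complex conjugation $c_v$ at $v$. Choosing embeddings so that our $c \in \Gamma_\Q$ lies in $\Gamma_L$ (possible since $L$ is totally real) and is itself a complex conjugation of $L$ at some archimedean place, we conclude $\tr\rho(c) = 0$. For the ``in particular'' assertion, suppose $N \equiv 2 \pmod 4$ and $\rho$ factored through $\mr{SO}_N$: then $\det \rho(c) = 1$, but $\rho(c)^2 = 1$ together with $\tr\rho(c) = 0$ force $\rho(c)$ to have exactly $N/2$ eigenvalues equal to $+1$ and $N/2$ equal to $-1$, giving $\det\rho(c) = (-1)^{N/2} = -1$, a contradiction. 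The principal obstacle will be checking that the bound $\ell > 2(2N+1)$, in combination with $\mr{GL}_N$-irreducibility of $\br|_{\Gamma_{\Q(\zeta_\ell)}}$, supplies the adequate image hypothesis required by the cited potential automorphy theorem; the rest is essentially bookkeeping.
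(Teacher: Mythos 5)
Your proposal has the right high-level shape (potential automorphy $\Rightarrow$ constraint on $\tr\rho(c)$), but it skips the two technical steps that actually make the argument work, and the first omission is fatal.

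First, you cannot invoke the potential automorphy theorems of \cite{blggt:potaut} directly on $\rho$. Those theorems apply to \emph{totally odd polarizable} representations, and for an orthogonal $\rho$ over a totally real field the polarized pair is $(\rho, 1)$ with trivial multiplier, which is totally \emph{even}, not totally odd. Worse, whether $\rho$ can be made odd is intimately tied to the very quantity $\tr\rho(c)$ you are trying to control, so you cannot simply assume the hypothesis is met. The parenthetical ``after, if necessary, a preliminary descent via a CM quadratic extension'' does not repair this: base change does not change the sign of the polarization. What the paper actually does is Harris's tensor trick: one constructs a $2$-dimensional symplectic $r = \mr{Ind}_{\Gamma_K}^{\Gamma_\Q}(\psi)$ with $r^* \cong r \otimes \mu$ and $\mu(c) = -1$, so that $\rho' = \rho \otimes r$ lands in $\mr{GSp}_{2N}$ and $(\rho', \mu)$ \emph{is} totally odd polarizable. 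Only then does \cite[Corollary 4.5.2]{blggt:potaut} apply; one then descends back to $(\rho,1)$ using the lemmas of \cite[\S 2.2]{blggt:potaut} on induction and solvable base change. The bound $\ell > 2(2N+1)$ and hypothesis (2) are calibrated so that $\br \otimes \bar r$ (a $2N$-dimensional representation) has an adequate image, which is one reason for the precise constant.

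Second, the conclusion $\tr\rho(c) = 0$ does not follow from ``local--global compatibility at infinity'' plus a formal decomposition of the archimedean $L$-parameter. The relation between $\rho(c_v)$ and $\phi_{\pi_v}(j)$ for polarized automorphic Galois representations is a deep theorem, not a formal consequence of the construction of $r_{\ell,\iota}(\pi)$; this is precisely the content of Taïbi's theorem \cite[Theorem A]{taibi:trace} (building on Taylor, Bellaïche--Chenevier, Caraiani), which gives $|\tr\rho(c)| \le 1$ and hence $\tr\rho(c) = 0$ since $N$ is even. Your archimedean analysis is also not quite right as stated: a $1$-dimensional summand of $\phi_{\pi_v}$ need not have Hodge--Tate weight $0$, since non-self-dual characters can occur in pairs $\chi \oplus \chi^{-1}$; so the ``at most one $1$-dimensional piece'' argument has a gap. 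In short, the proposal needs both the tensor trick and Taïbi's theorem; without them the argument does not go through. The final ``in particular'' deduction (eigenvalue count forcing $\det\rho(c)=-1$ when $N\equiv 2\pmod 4$) is fine.
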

\begin{proof}
  In this proof we freely use the terminology and notation of
  \cite{blggt:potaut}; we argue as in, for instance, \cite[Proposition
  3.3.1]{blggt:potaut}, making use of Harris's tensor product
  trick. Choose a quadratic imaginary field $K/\Q$ linearly disjoint
  from $\Q(\br, \zeta_{\ell})/\Q$, and let
  $\psi \colon \Gamma_K \to \ov{\Z}_{\ell}^\times$ be a geometric
  (hence potentially crystalline and potentially diagonalizable)
  character such that $r:= \mr{Ind}_{\Gamma_K}^{\Gamma_{\Q}}(\psi)$
  satisfies:
\begin{itemize}
\item $r^* \cong r \otimes \mu$, for some character
    $\mu$ with $\mu(c)=-1$ (this imposes no condition on $\psi$).
\item $r|_{\Gamma_{\Q_{\ell}}}$ is Hodge--Tate regular, with Hodge numbers $\{m, 0\}$ having the property that no two Hodge numbers of $\rho$ differ by $|m|$.
\item $(\br \otimes \bar{r})|_{\Gamma_{\Q(\zeta_{\ell})}}$ is irreducible. 
\end{itemize}
To produce such a $\psi$, we can apply \cite[Lemma A.2.5]{blggt:potaut} as in \cite[Proposition 4.1.1]{blggt:potaut}. Namely, let $m$ be some integer such that no two Hodge numbers of $\rho$ differ by $m$, let $q$ be a rational prime split in $K(\zeta_{\ell})$ at which $\br$ is unramified, let $\ov{\chi}_0 \colon \Gamma_{\Q} \to \ov{\mathbb{F}}_{\ell}^\times$ be any finite-order character ramified at $q$, with Teichm\"{u}ller lift $\chi_0 \colon \Gamma_{\Q} \to \ov{\Z}_{\ell}^\times$, and let $\chi= \kappa_{\ell}^{m} \chi_0$, where $\kappa_{\ell}$ denotes the $\ell$-adic cyclotomic character. At the two places $v$ and $\bar{v}$ of $K$ above $\ell$, define $\psi_v= \chi|_{\Gamma_{K_v}}$ and $\psi_{\bar{v}}= 1$ (as characters of $\Gamma_{K_v}$ and $\Gamma_{K_{\bar{v}}}$, respectively). At the two places $v_q$ and $\bar{v}_q$ of $K$ above $q$, define $\psi_{v_q}= \chi|_{\Gamma_{K_{v_q}}}$ and $\psi_{\bar{v}_q}=1$. Letting $S=\{v, \bar{v}, v_q, \bar{v}_q\}$, we have that for each $w \in S$, $\psi_w \psi_{\bar{w}}^c|_{I_{K_w}}= \chi|_{I_{K_w}}$, so \cite[Lemma A.2.5]{blggt:potaut} implies there exists a character $\psi \colon \Gamma_K \to \ov{\Z}_{\ell}^\times$ such that $\psi \psi^c= \chi|_{\Gamma_K}$ and $\psi|_{I_{K_w}}= \psi_w|_{I_{K_w}}$ for all $w \in S$. The resulting $r= \mr{Ind}_{\Gamma_K}^{\Gamma_{\Q}} (\psi)$ satisfies the three desired properties: the first two are clear, and we now check the irreducibility. By assumption, $\bar{\rho}|_{\Gamma_{K(\zeta_{\ell})}}$ is irreducible, so if $(\br \otimes \bar{r})|_{\Gamma_{\Q(\zeta_{\ell})}}$ failed to be irreducible, it would have a constituent $\ov{W}$ restricting to either $\bar{\rho}\bar{\psi}|_{\Gamma_{K(\zeta_{\ell})}}$ or $\bar{\rho}\bar{\psi}^c|_{\Gamma_{K(\zeta_{\ell})}}$. Since $\br \bar{\psi}|_{\Gamma_{K(\zeta_{\ell})}}$ is ramified at (places above) $v_q$ but not at (places above) $\bar{v}_q$, neither $\br \bar{\psi}|_{\Gamma_{K(\zeta_{\ell})}}$ nor $\br \bar{\psi}^c|_{\Gamma_{K(\zeta_{\ell})}}$ can extend to $\Gamma_{\Q(\zeta_{\ell})}$, establishing the claim.

Consider then the representation $\rho':= \rho \otimes r \colon \Gamma_{\Q} \to \mr{GL}_{2N}(\ov{\Z}_{\ell})$. Since $\rho$ is orthogonal, and $r$ is symplectic, $\rho' \colon \Gamma_{\Q} \to \mr{GSp}_{2N}(\ov{\Z}_{\ell})$ is symplectic. Moreover, $(\rho')^* \cong \rho' \otimes \mu$, and thus we see that $\rho'$ is totally odd polarizable. Moreover, $\rho'|_{\Gamma_{\Q_{\ell}}}$ is still potentially diagonalizable with distinct Hodge--Tate weights. Thus we can apply the potential automorphy theorem of \cite[Corollary 4.5.2]{blggt:potaut} to conclude that $(\rho', \mu)$ is potentially automorphic. 
Now we apply \cite[Lemma 2.2.4]{blggt:potaut}: in the notation of \textit{loc. cit.}, $(\rho'= \rho \otimes r \cong \mr{Ind}_{\Gamma_K}^{\Gamma_{\Q}}( \rho|_{\Gamma_K} \otimes \psi), \mu)$ is potentially automorphic and polarized, so $(\rho|_{\Gamma_K} \otimes \psi, \mu)$ is also potentially automorphic and polarized. By \cite[Lemma 2.2.1, Lemma 2.2.2]{blggt:potaut}, $(\rho, 1)$ is potentially automorphic, i.e. there exists a regular algebraic, polarized cuspidal automorphic representation $(\pi, \chi)$ of $\mr{GL}_N(\A_{\Q})$ such that $(\rho, 1) \cong (r_{\ell, \iota}(\pi), \kappa_{\ell}^{1-N}r_{\ell, \iota}(\chi))$,
and $r_{\ell, \iota}(\pi)$ is the automorphic Galois representation associated to $\pi$ (see \cite[Theorem 2.1.1]{blggt:potaut}; note that in their normalization, $r_{\ell, \iota}(\pi)$ is the Galois representation whose local restrictions correspond under local Langlands to $\pi \otimes |\cdot|^{\frac{1-N}{2}}$). It follows that $\chi= |\cdot|^{N-1}$, and $\pi \otimes |\cdot|^{\frac{1-N}{2}}$ is the (on the nose) self-dual regular L-algebraic cuspidal automorphic representation corresponding to $\rho$ under the local Langlands correspondence. Applying \cite[Theorem A]{taibi:trace} to $\pi \otimes |\cdot|^{\frac{1-N}{2}}$, we find that $|\tr(\rho(c))| \leq 1$, and thus ($N$ is even) $\tr(\rho(c))=0$.
\end{proof}
\begin{rem}
The main limitation in this result is the potential diagonalizability assumption. But in a compatible system of Hodge--Tate regular automorphic Galois representations, almost all members will be potentially diagonalizable by the theory of Fontaine--Laffaille. Calegari (\cite{calegari:even2}) has proven a stronger result that a geometric and Hodge--Tate regular $\Gamma_{\Q} \to \mr{GL}_2(\ov{\Z}_{\ell})$ must be odd, without a potential diagonalizability hypothesis.
\end{rem}
\section{Zywina's examples}\label{zywinasection}

\subsection{A review of Zywina's work}

In this section we recall the construction of Zywina (\cite{zywina:orthogonal}) that in many cases realizes the simple groups of orthogonal type over $\F_\ell$ as Galois groups of regular extensions of $\Q(t)$, and in particular as Galois groups over $\Q$ (in infinitely many ways). Some of the finer points--namely, the simplicity itself--of this construction will not matter for our present purposes: rather, we want to use this construction as a source of Galois groups over $\Q$ isomorphic to large subgroups of even orthogonal groups, and we recall only what is necessary for our purposes. 

Fix an even integer $N \geq 6$. Let $R= \Z[S^{-1}]$, for a finite set of primes $S$ that may be enlarged as the construction proceeds. Zywina first considers polynomials $a_2(t), a_4(t), a_6(t) \in R[t]$, such that the discriminant $\Delta(t)$ of the Weierstrass equation $y^2=x^3+a_2(t)x^2+a_4(t)x+a_6(t)$ is non-zero, and the $j$-invariant is non-constant, and builds a family of quadratic twists of this Weierstrass equation. This leads to the following construction of a rank $N$ orthogonal local system on an open subset of $\P^1_{R}$, which we rapidly summarize:
\begin{itemize}
\item Let $A=R[u, \Delta(u)^{-1}])$ and set $M= \Spec(A)$.
\item Let $j \colon U \to \P_M^1$ be the inclusion of the open $M$-subscheme defined by $U= \Spec(A[t, (t-u)^{-1}, \Delta(t)^{-1}])$.
\item Let $E \to U$ be the elliptic curve defined by the Weierstrass equation
\[
(t-u)y^2= x^3+a_2(t)x^2+a_4(t)x+a_6(t),
\]
\item Let $\pi \colon \P^1_M \to M$ be the structure morphism, and define
\[
\mc{G}= R^1 \pi_*(j_* E[\ell]),
\]
where $E[\ell]$ is the local system (of $\F_{\ell}$-modules) on $U$ defined by the $\ell$-torsion subgroup scheme of $E$. The sheaf $\mc{G}$ on $M$ is clearly constructible, and Zywina shows (\cite[Lemma 3.3]{zywina:orthogonal}) that it is lisse after suitable enlargement of $S$. Specializing to a point $m$ of $M$ over a finite field $k$ yields an elliptic curve $E_m$ over the global field $k(t)$, and Zywina shows the rank of $\mc{G}$ is independent of the specialization and can be computed explicitly in terms of bad reduction invariants of $E_m$ (\cite[Proposition 2.8, Lemma 3.1]{zywina:orthogonal}); precisely, the rank $N$ of $\mc{G}$ is $-4+\deg(\mr{cond}(E_m))$, where $\deg(\mr{cond}(E_m))$ is the degree of the conductor of $E_m$. By a careful choice of the $a_i(t)$ and case-by-case calculation with Tate's algorithm (in \cite[\S 6]{zywina:orthogonal}), Zywina produces examples with our desired even rank $N \geq 6$. Poincar\'{e} duality provides an orthogonal pairing $\mc{G} \times \mc{G} \to \F_{\ell}$, and so given a geometric generic point $\bar{\xi}$ of $M$ we obtain a representation 
\[
\theta_{\ell} \colon \pi_1(M, \bar{\xi}) \to \mr{O}(\mc{G}_{\bar{\xi}}).
\]
\end{itemize}
In fact Zywina must, in order to realize the simple groups as Galois groups, work with the pull-back $h^*(\mc{G})$ of this local system along a suitable finite \'{e}tale cover $h \colon W \to M$, where $W$ is again an open subscheme of $\P^1_R$ (see the proof of \cite[Theorem 4.1]{zywina:orthogonal}). He denotes by 
\[
\vartheta_{\ell} \colon \pi_1(W) \to \mr{O}(V_{\ell})
\] 
the representation associated to $h^*(\mc{G})$ (omitting reference to the choice of compatible base-point).

Recall (see \cite[\S 1.1]{zywina:orthogonal}) that the spinor norm is a homomorphism $\mr{sp} \colon \mr{O}(V_{\ell}) \to \F_{\ell}^\times/(\F_{\ell}^\times)^2$, and let $\Omega(V_{\ell}) \subset \mr{SO}(V_{\ell})$ be the subgroup of elements with trivial spinor norm. When the discriminant $\mr{disc}(V_{\ell}):= \mr{sp}(-1)$ is the trivial coset, $-1$ belongs to $\Omega(V_{\ell})$. We will use the following consequence of Zywina's main theorem and arguments:
\begin{prop}\label{hilbert}
Let $N \geq 6$ be an even integer, and let $\ell \geq 5$ be a prime. Then the elliptic curve $E \to U$ may be chosen to ensure that the quadratic space $V_{\ell}$ has discriminant $(\F_{\ell}^\times)^2$, and that there are infinitely many $w_i \in \Q$ such that the specializations $\vartheta_{\ell, w_i}$ (defined up to conjugation)
\[
\Gamma_{\Q} \xrightarrow{w_i} \pi_1(W) \xrightarrow{\vartheta_{\ell}} \mr{O}(V_{\ell})
\]
are non-isomorphic and satisfy $\vartheta_{\ell, w_i}(\Gamma_{\Q})= \Omega(V_{\ell})$.

When $N \equiv 6 \pmod 8$, we also consider the same family $E \to U$ and the pull-back along a different $h \colon W \to M$, and the above holds except with the conclusion that $\vartheta_{\ell, w_i}(\Gamma_{\Q})$ contains $\Omega(V_{\ell})$ with index 2, and $\vartheta_{\ell, w_i}(\Gamma_{\Q(i)})$ equals $\Omega(V_{\ell})$. We will refer to the two examples when $N \equiv 6 \pmod 8$ as Case $6_\Omega$ and Case $6_{\mr{O}}$.
\end{prop}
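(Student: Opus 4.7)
The plan is to bootstrap Zywina's construction: his main theorem supplies the monodromy computation on $\pi_1(W)$, and the only additional work is to pin down the discriminant condition and to specialize via Hilbert irreducibility.

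First, I would invoke Zywina's main theorem (\cite[Theorem 1.1]{zywina:orthogonal}): for any fixed even $N \geq 6$ and $\ell \geq 5$, it produces $a_2(t), a_4(t), a_6(t)$ and a finite \'etale cover $h \colon W \to M$ such that $\vartheta_\ell(\pi_1(W)) = \Omega(V_\ell)$, except in Case $6_{\mr{O}}$ (when $N \equiv 6 \pmod 8$), where Zywina also provides an alternative cover $h$ for which the image contains $\Omega(V_\ell)$ with index $2$ and the quotient factors through the sign character $\gal(\Q(i)/\Q) \to \{\pm 1\}$. To also ensure $\mr{disc}(V_\ell) \in (\F_\ell^\times)^2$, I would inspect Zywina's case-by-case families in \cite[\S 6]{zywina:orthogonal}: the discriminant of the Poincar\'e pairing on $\mc{G}_{\bar\xi}$ is a product of local invariants at the singular fibers of $E \to U$, read off from the Kodaira types via Tate's algorithm as in \cite[\S 3, \S 6]{zywina:orthogonal}. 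Among Zywina's families one selects (possibly after a quadratic twist of the Weierstrass equation) one for which this product lies in $(\F_\ell^\times)^2$; twisting affects the orthogonal form only by a square class, leaving the monodromy computation intact.

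Second, since $W$ is a non-empty Zariski open subscheme of $\P^1_\Q$, I would apply Hilbert's irreducibility theorem to the finite \'etale cover of $W$ cut out by $\ker(\vartheta_\ell)$: the set $T \subset W(\Q)$ of rational points whose specialization fails to surject onto $\vartheta_\ell(\pi_1(W))$ is thin. As $W(\Q)$ is infinite, $W(\Q) \setminus T$ is infinite, producing infinitely many $w$ with $\vartheta_{\ell, w}(\Gamma_\Q) = \Omega(V_\ell)$ (respectively, the index-$2$ overgroup in Case $6_{\mr{O}}$, whose restriction to $\Gamma_{\Q(i)}$ kills the extra quotient). To extract infinitely many \emph{pairwise non-isomorphic} specializations, I would observe that $\vartheta_{\ell, w}$ is ramified at the rational primes of bad reduction of $E_w$, and hence by iteratively excluding finitely many candidate ramification loci I can choose $w_1, w_2, \ldots$ with strictly growing ramification sets and hence pairwise non-isomorphic representations.

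The main obstacle is the bookkeeping in Step~1: confirming that some family on Zywina's list (possibly after quadratic twist) yields an orthogonal space with square discriminant while preserving the monodromy computation. Once this has been arranged, the remaining assertions are formal consequences of Zywina's theorem combined with Hilbert irreducibility.
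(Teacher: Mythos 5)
Your Step 2 (Hilbert irreducibility over $\Q$ to find infinitely many specializations surjecting onto the arithmetic monodromy, plus bookkeeping to get pairwise non-isomorphism) matches the paper's argument, though the paper achieves non-isomorphism by invoking the stronger form of Hilbert irreducibility that yields specializations whose fixed fields are linearly disjoint over $\Q$, rather than your iterative growing-ramification-set trick. Your Step 1, however, contains a real gap in Case $6_{\mr{O}}$. You write that Zywina ``also provides an alternative cover $h$ for which the image contains $\Omega(V_\ell)$ with index $2$ and the quotient factors through the sign character $\gal(\Q(i)/\Q) \to \{\pm 1\}$.'' This is not available off the shelf: Zywina's published construction (\cite{zywina:orthogonal}) only yields Case $6_\Omega$ when $N \equiv 6 \pmod 8$. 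Case $6_{\mr{O}}$ arises from the earlier preprint (\cite{zywina:orthogonalold}), which \emph{intended} to produce image $\Omega(V_\ell)$ but contained an error in the root-number computation. The paper's proof has to (a) redo the root number via Tate's algorithm for the explicit Weierstrass family, obtaining $\varepsilon_{E_{h(w)}} = \left(\frac{-1}{p}\right)$ rather than $1$, (b) deduce from this that $\det(\vartheta_\ell) \colon \pi_1(W) \to \F_\ell^\times$ factors through the nontrivial quadratic character of $\gal(\Q(i)/\Q)$, so the image lands in $\mr{O}(V_\ell)$ but not $\mr{SO}(V_\ell)$, and (c) re-run the monodromy argument of \cite[Theorem 4.1]{zywina:orthogonal}, using Hall's big-monodromy result plus the solvability of the cover $h$, to show the geometric monodromy still contains $\Omega(V_\ell)$ and that $\vartheta_\ell(\pi_1(W_{\Q(i)})) = \Omega(V_\ell)$. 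Without this correction, there is no cover with the index-$2$ image you need; your outline as written would simply fail to produce Case $6_{\mr{O}}$.

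Two smaller points. On the discriminant: you suggest fixing $\mr{disc}(V_\ell) \in (\F_\ell^\times)^2$ by a quadratic twist if needed; the paper does not twist, as Zywina's explicit families in \cite[\S 6]{zywina:orthogonal} already satisfy this (and the twisting would anyway need to be checked to preserve the monodromy). On the $\Q(i)$-restriction in Case $6_{\mr{O}}$: you remark in passing that the restriction to $\Gamma_{\Q(i)}$ ``kills the extra quotient.'' This is fine provided you have already established the factorization through $\gal(\Q(i)/\Q)$, which again depends on the corrected root number; the paper additionally applies Hilbert irreducibility a second time, over $\Q(i)$ (using that $W(\Q)\setminus T$ remains non-thin as a subset of $\Q(i)$, via \cite[Proposition 3.2.1]{serre:topics}), to get specializations that are moreover linearly disjoint over $\Q(i)$.
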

\begin{proof}
  When $N \equiv 0, 2, 4 \pmod{8}$, or in Case $6_\Omega$, the
  existence of specializations $w_i$ with
  $\vartheta_{\ell, w_i}(\Gamma_\Q)= \Omega(V_{\ell})$ is immediate
  from the proof of \cite[Theorem 1.1]{zywina:orthogonal} (in \S 6 of
  \emph{op.~cit.}) and the Hilbert irreducibility theorem (\cite[\S
  3.3-3.4]{serre:topics}), which produces $w_i$ such that the fixed
  fields of the $\vartheta_{\ell, w_i}$ are linearly disjoint over
  $\Q$. Case $6_{\mr{O}}$ arises from an earlier version (\cite{zywina:orthogonalold}) of
  \cite{zywina:orthogonal}. Because that version is not destined to be published, we rapidly recall the family $E$ considered and its numerical invariants, which are all computed by Tate's algorithm. We follow the notation of \cite{zywina:orthogonal}. Writing $N=8n+6$, define $h(u)= \frac{u^2+1}{2u}$ and $f(t)= \prod_{i=1}^{4n+1} (t-h(i+1))$, and consider the Weierstrass equation (defining the polynomials $a_i(t)$ in this case)
  \[
  y^2= (x-t(t^2-2)f(t))\cdot (x-t(t^2+1)f(t))\cdot (x+t(2t^2-1)f(t)).
  \]
  For any sufficiently large prime $p$ and any $w \in W(\mathbb{F}_p)$, the points of bad reduction of the elliptic curve $E_{h(w)}$ over $\mathbb{F}_p(t)$ are $x=1, -1, 0, \infty$, and any root $x=a$ of $(t-h(w))f(t) \pmod p$. The bad reduction invariants are as follows:
  
  \begin{center}
  \begin{tabular}{| c | c | c | c | }
  \hline
  $x$ & $\kappa_x$ & split multiplicative reduction iff & $c_x(E_{h(w)})$ \\ \hline
  -1 & $\mr{I}_2$ & $-3(1+h(w))f(-1) \in (\mathbb{F}_p^\times)^2$ & 2 \\ \hline
  1 & $\mr{I}_2$ & $-3(1-h(w))f(1) \in (\mathbb{F}_p^\times)^2$ & 2\\ \hline
  0 & $\mr{I}_4^*$ & $\emptyset$ & $c_0$\\ \hline
  $\infty$ & $\mr{I}_4^*$ & $\emptyset$ & $c_{0}$ \\ \hline
  $a$ & $\mr{I}_0^*$ & $\emptyset$ & 4\\ \hline
  \end{tabular}
  \end{center}
  
  (The precise value $c_0$ does not matter here: it will only matter that these invariants are the same at $0$ and $\infty$.) From this bad reduction data, one easily computes the other invariants of the curve $E_{h(w)}$ as in \cite[\S 2]{zywina:orthogonal}; we will invoke these calculations below. The calculations of \cite[\S 6.4]{zywina:orthogonalold} are correct except the root number
  \[
  \varepsilon_{E_{h(w)}}= \left(\frac{-3(1+h(w))f(-1)}{p} \right)\left( \frac{-3(1-h(w))f(1)}{p} \right) \left( \frac{-1}{p} \right)^2 \left(\frac{-1}{p} \right)^{\deg(f)+1}= \left( \frac{-1}{p} \right)
  \]
  (in \textit{loc.~cit.}, the last exponent $\deg(f)+1$ is replaced with $\deg(f)$, so there is an incorrect conclusion that the root number is 1). Using this corrected root number, it follows from \cite[Proposition 3.2]{zywina:orthogonal} that $\det(\vartheta_{\ell})(\mr{Frob}_w) = \left( \frac{-1}{p} \right)$
  for all $p \gg 0$, $w \in W(\F_p)$, and so
  $\det(\vartheta_{\ell}) \colon \pi_1(W) \to \F_{\ell}^\times$
  factors through the non-trivial quadratic character
  $\pi_1(W) \to \Gamma_{\Q} \to \gal(\Q(i)/\Q) \to \F_{\ell}^\times$
  (with the first map induced by the structure morphism). The proof of
  \cite[Theorem 4.1]{zywina:orthogonal} then still shows that the geometric
  monodromy group $\vartheta_{\ell}(\pi_1(W_{\ov{\Q}}))$ contains
  $\Omega(V_{\ell})$, and that $\vartheta_{\ell}(\pi_1(W_{\Q(i)}))$ is
  contained in $\Omega(V_{\ell})$.

  Indeed, first we note that the calculation of
  $\det(\vartheta_{\ell})$ shows $\vartheta_{\ell}(\pi_1(W_{\Q(i)}))$
  is contained in $\mr{SO}(V_{\ell})$. Next we observe that
  \cite[Theorem 3.4]{zywina:orthogonal} (originally due to Hall in
  \cite{hall:bigmonodromy}) shows that the image of
  $\pi_1(M_{\ov{\Q}})$ contains $\Omega(V_{\ell})$---this is easily
  read off from the bad reduction invariants tabulated above---and
  then the observation in the proof of \cite[Theorem
  4.1]{zywina:orthogonal} that $\vartheta_{\ell}(\pi_1(W_{\ov{\Q}}))$
  also contains $\Omega(V_{\ell})$ continues to apply (namely, that
  the morphism $h$ has degree at most four---in fact two in this
  case---so that a Galois closure of $W_{\ov{\Q}} \to M_{\ov{\Q}}$ has
  solvable Galois group, whereas $\Omega(V_{\ell})$ is simple). We
  conclude by noting that $\vartheta_{\ell}(\pi_1(W_{\Q(i)}))$ is
  contained in $\Omega(V_{\ell}) \subset \mr{SO}(V_{\ell})$: this
  follows as in \cite[Theorem 4.1]{zywina:orthogonal} from the bad
  reduction invariants in the table (again, this part of the
  calculation does not depend on the erroneous root number in \cite[\S
  6.4]{zywina:orthogonalold}).
  Thus $\vartheta_{\ell}(\pi_1(W_{\ov{\Q}}))= \vartheta_{\ell}(\pi_1(W_{\Q(i)}))= \Omega(V_{\ell})$, and
  $\vartheta_{\ell}(\pi_1(W)) \subset \mr{O}(V_{\ell})$ contains $\Omega(V_{\ell})$ with
  index 2. We can again invoke Hilbert irreducibility to produce the
  desired $u_i$: there is a thin set $T \subset W(\Q)$ such that for
  all $w \in W(\Q) \setminus T$, $\vartheta_{\ell, w}$ cuts out a
  Galois extension of $\Q$ with Galois group isomorphic to
  $\vartheta_{\ell}(\pi_1(W))$. Since
  $W(\Q) \setminus T \subset \Q(i)$ is not a thin subset (see
  \cite[Proposition 3.2.1]{serre:topics}), applying Hilbert
  irreducibility to $\vartheta_{\ell}|_{\pi_1(W_{\Q(i)})}$ shows that
  there are infinitely many $w_i \in W(\Q) \setminus T$ such that
  $\vartheta_{\ell, w_i}(\Gamma_{\Q(i)})$ equals
  $\Omega(V_{\ell})$. The resulting extensions are linearly disjoint
  over $\Q(i)$.
\end{proof}
\begin{rem}
We have included the additional example of Case $6_{\mr{O}}$ to indicate that our results do not depend on all of the fine details of \cite{zywina:orthogonal}; one could imagine producing many more examples starting from Hall's big monodromy result (\cite[Theorem 3.4]{zywina:orthogonal}).
\end{rem}

\subsection{The trace of complex conjugation}

Let $w \in W(\R)$ be any real point, giving rise (fixing a $\C$-valued
geometric point $\bar{w}$ over $w$) to the stalk
$h^*(\mc{G})_{\bar{w}} \cong \mc{G}_{h(\bar{w})} \cong H^1(\P^1_{\C},
j_* E_{h(w)}[\ell])$ (we omit indicating the $h(w)$-specialization in
the notation for this $\P^1$ and $j$ so as not to burden the
notation). A minimal proper regular model
$\pi \colon E_{h(w)} \to \P^1_{\R}$ of the Weierstrass equation (over
$\R(t)$) defines our elliptic surface. The map $\pi$ is smooth over an
open subset $U_{h(w)} \subset \P^1_{\R}$ that depends on $w$.

In our application of the lifting theorem of \cite{fkp:reldef}, we
most importantly have to understand the action of complex conjugation
in the Galois representations $\vartheta_{\ell, w}$, $w \in
W(\Q)$. The Betti-\'{e}tale comparison isomorphism, applied to the
cohomology of $E_{h(w)}$, reduces this to a topological computation
over $\mathbb{C}$. In order not to interrupt the flow, we perform this
computation in Appendix \ref{complexconj}, and using this we prove the
following:
\begin{lem} \label{lem:z} %
  If $\ell \gg_N 0$, the trace of $c \in \gal(\C/\R)$ acting on the
  $\F_{\ell}$-vector spaces $H^1(\P^1_{\C}, j_* E_{h(w)}[\ell])$ is
  given by
\begin{equation}
  \mr{Tr}(c) = \begin{cases}
    -2 & \text{ if } N \equiv 2 \pmod 8 \\
    0  & \text{ if } N \equiv 4 \pmod 8 \\
    0 & \text { if } N \equiv 6 \pmod 8 \text{ (Case $6_{\mr{O}}$)}\\
    2 & \text { if } N \equiv 6 \pmod 8 \text{ (Case $6_{\Omega}$)}\\
    0 & \text { if } N \equiv 0 \pmod 8
  \end{cases}
\end{equation}
\end{lem}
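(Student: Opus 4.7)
The plan is to translate the étale trace into a topological one via Artin's comparison isomorphism and then apply the Lefschetz fixed-point formula to complex conjugation on the elliptic surface $\pi \colon E_{h(w)} \to \P^1_{\R}$. Since $\ell \gg_N 0$, one may choose a $c$-stable $\Z_{\ell}$-lattice in $\h^2_{\mr{\acute{e}t}}(E_{h(w), \ov{\Q}}, \Q_{\ell})$ whose reduction mod $\ell$ recovers the target $\F_{\ell}$-cohomology, so it suffices to compute traces rationally and reduce at the end. Complex conjugation preserves the orientation of a complex surface, and $E_{h(w), \C}$ is simply connected (an elliptic surface over $\P^1$ with a section), so $\h^1 = \h^3 = 0$ and $c^*$ acts as $+1$ on $\h^0$ and $\h^4$. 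The Lefschetz formula therefore collapses to
\[
\tr\bigl(c^* \bigm| \h^2(E_{h(w), \C}, \Q)\bigr) \;=\; \chi\bigl(E_{h(w)}(\R)\bigr) - 2.
\]

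Next I would use the Leray spectral sequence for $\pi$ to identify $\h^1(\P^1_{\C}, j_* E_{h(w)}[\ell])$, for $\ell$ large, with the $c$-stable direct summand of $\h^2(E_{h(w), \C}, \F_{\ell})$ complementary to the ``trivial'' Néron--Severi sublattice $T$ spanned by the zero section, a general fiber class, and the non-identity irreducible components of the singular fibers of $\pi$. Thus
\[
\tr\bigl(c^* \bigm| \h^1(\P^1_{\C}, j_* E_{h(w)}[\ell])\bigr) \;=\; \chi\bigl(E_{h(w)}(\R)\bigr) - 2 - \tr(c^* | T).
\]
The trace on $T$ is purely combinatorial: $c^*$ fixes the zero section and the fiber class, and permutes the non-identity components of each singular fiber by an explicit involution determined by the real Tate-algorithm data of $E_{h(w)}$ at each point of $\P^1(\R)$ where $\pi$ is singular.

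The remaining ingredient, $\chi(E_{h(w)}(\R))$, I would compute by a fiberwise analysis: every smooth real fiber of $\pi$ is a real elliptic curve with $\chi = 0$, so the real Euler characteristic is a sum of local contributions at the real singular fibers, each determined by its Kodaira type and (for multiplicative reduction) by whether that reduction is split over $\R$. The relevant bad-reduction data is exactly that tabulated in (and used in the proof of) Proposition \ref{hilbert}: the Kodaira symbols $\mr{I}_2$, $\mr{I}_0^*$, $\mr{I}_4^*$, together with the sign conditions controlling splitness, are already present for each family we consider. The main obstacle is therefore the meticulous case analysis needed to track simultaneously (i) how many multiplicative fibers of $E_{h(w)}$ are split over $\R$, (ii) the real structure on the component cycles of the additive $\mr{I}_n^*$ fibers, and (iii) the resulting action of $c^*$ on $T$; the residue $N \pmod 8$ and, when $N \equiv 6 \pmod 8$, the choice between Case $6_{\mr{O}}$ and Case $6_{\Omega}$, ultimately encode precisely this real-geometric data, yielding the four stated values $-2, 0, 2$.
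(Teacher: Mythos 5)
Your overall strategy — Lefschetz to reduce to $\chi(E_{h(w)}(\R))$, then peel off the algebraic-cycle sublattice $T$ via the Leray decomposition of $H^2$ — is the same as the paper's (Lemmas \ref{lem:trace}--\ref{lem:trfibre} and Proposition \ref{prop:z}). But there is a genuine gap where you identify $H^1(\P^1_{\C}, j_* E_{h(w)}[\ell])$ with the Leray summand of $H^2(E_{h(w)}(\C), \F_{\ell})$ complementary to $T$. That summand is $H^1(\P^1_{\C}, R^1\pi_*\F_{\ell})$, and the Weil pairing gives $E_{h(w)}[\ell]|_U \cong (R^1\pi_*\F_{\ell}|_U)(1)$ as local systems on $U$, so in fact
\[
H^1\bigl(\P^1_{\C}, j_* E_{h(w)}[\ell]\bigr) \cong H^1\bigl(\P^1_{\C}, R^1\pi_*\F_{\ell}\bigr)(1),
\]
and the transport-of-structure involution acts by $-1$ on $\F_{\ell}(1)$. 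The trace you want is therefore the \emph{negative} of $\chi(E_{h(w)}(\R)) - 2 - \tr(\mr{F}_{\infty}|T)$. The paper records exactly this sign flip (there implemented as a Verdier-duality isomorphism $H^1(\P^1_{\C}, j_*T_\ell(E)\otimes\Q_\ell) \cong H^1(\P^1_{\C}, j_*R^1\pi_*\Q_\ell)^\vee(-1)$ followed by the observation that the $(-1)$-twist multiplies $\tr(c)$ by $-1$); without it your recipe would produce $2, 0, 0, -2, 0$ for the five cases, i.e. the opposite sign precisely where the trace is nonzero.

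A second, smaller imprecision: to pass from $\Q_\ell$ to $\F_\ell$ traces you invoke ``a $c$-stable $\Z_\ell$-lattice in $H^2_{\acute{e}t}$,'' but what is actually needed (and what the hypothesis $\ell \gg_N 0$ buys you, via the bad-reduction invariants tabulated in the proof of Proposition \ref{hilbert}) is the short exact sequence
\[
0 \to H^1(\P^1_{\C}, j_* T_{\ell}(E)) \xrightarrow{\ \cdot \ell\ } H^1(\P^1_{\C}, j_* T_{\ell}(E)) \to H^1(\P^1_{\C}, j_* E[\ell]) \to 0,
\]
which shows the $\Z_\ell$-cohomology of $j_*T_\ell(E)$ is free of the correct rank; the naive lattice in $H^2_{\acute{e}t}$ does not directly control $H^1(\P^1_{\C}, j_*E[\ell])$ because of the same Tate twist issue.
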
  
\begin{proof}
  We will deduce the lemma from Proposition \ref{prop:z}.  The Cases
  (1)-(4) therein correspond to the singular fibres, and their
  configurations over $\R$, of the elliptic surfaces considered in
  \cite[\S 6]{zywina:orthogonal} and (for Case (3${}_{\mr{O}}$))
  \cite[\S 6.4]{zywina:orthogonalold} as follows:
\begin{itemize}
\item Case (1) describes the fibres when $N \equiv 2 \pmod 8$;
\item Case (2) when $N \equiv 4 \pmod 8$;
\item Case (3${}_{\mr{O}}$) when $N \equiv 6 \pmod 8$, and we are in
  Case $6_{\mr{O}}$, and Case (3${}_{\Omega}$) when we are in Case
  $6_\Omega$;
\item Case (4) when $N \equiv 0 \pmod 8$.
\end{itemize}
This claim about the structure of the singular fibres follows almost
immediately from the descriptions in \cite[\S
6]{zywina:orthogonal}. The one point to note is that Tate's algorithm
shows that Zywina's description of when the $I_n$ fibres are split or
non-split is also valid over $\R$ (e.g., for $N \equiv 2 \pmod 8$, the
$I_1$ fibre at $\infty$ is split if and only if $-3$ is a square in
$\R$, hence it is non-split).

We now claim that, assuming $\ell$ is large enough as in \cite[\S
2.5]{zywina:orthogonal}, the traces $\tr(\mr{F}_{\infty}| V)$ recorded
in Proposition \ref{prop:z} via analysis of the cohomology groups
$H^1(\P^1_{\C}, j_* R^1 \pi_* \Q_{\pi^{-1}(U_{h(w)})})$ are the
negatives of the traces of $c \in \gal(\C/\R)$ on the
$\F_{\ell}$-vector spaces $H^1(\P^1_{\C}, j_* E_{h(w)}[\ell])$.  The
formulae in the lemma then follow from the claim and the formulae in
Proposition \ref{prop:z}.

To prove the claim, note that \cite[Equation
(2.1)]{zywina:orthogonal} shows that there is a short exact
sequence\footnote{Zywina asserts this when the ground field is
  $\ov{\mathbb{F}}_p$, but it holds equally well over the complex
  numbers. It uses the assumption on $\ell$ and the description of the
  singular fibers: exactness of
  $0 \to j_*T_{\ell}(E) \to j_* T_{\ell}(E) \to j_* E[\ell] \to 0$ is
  equivalent to there being no $\ell$-torsion in
  $(R^1j_* T_{\ell}(E))_x= T_{\ell}(E)/(\gamma_x-1)T_{\ell}(E)$, where
  $\gamma_x$ is the local monodromy at the point of bad reduction
  $x$. Using the dictionary between types in the Kodaira
  classification and local monodromy matrices, this follows from the
  assumption that $\ell \neq 2,3$ and that $\ell$ does not divide
  $\mr{ord}_x(j(E))$ when this valuation is negative, i.e. in the
  multiplicative reduction case. To pass from the above short exact
  sequence of sheaves to the desired short exact sequence of
  cohomology groups follows from the global monodromy calculation
  \cite[Proposition 2.7]{zywina:orthogonal}, which is also valid over
  $\mathbb{C}$.}
\[
0 \to H^1(\P^1_{\C}, j_* T_{\ell}(E)) \xrightarrow{\cdot \ell} H^1(\P^1_{\C}, j_* T_{\ell}(E)) \to H^1(\P^1_{\C}, j_* E[\ell]) \to 0,
\]
so $H^1(\P^1_{\C}, j_* T_{\ell}(E))$ is a free $\Z_{\ell}$-module of
rank equal to the $\F_{\ell}$-dimension of
$H^1(\P^1_{\C}, j_* E[\ell])$. Since $\ell \neq 2$, the eigenvalues of
$c \in \gal(\C/\R)$ are the same on each space, and we can therefore
compute $\tr(c|H^1(\P^1_{\C}, j_* E[\ell]))$ by computing
$\tr(c|H^1(\P^1_{\C}, j_* T_{\ell}(E)\otimes \Q_{\ell}))$. This Galois
module is isomorphic to
$H^1(\P^1_{\C}, j_* R^1 \pi_* \Q_{\ell, \pi^{-1}(U_{h(w)})})^\vee(-1)$
by Verdier duality; dualizing leaves $\tr(c)$ unchanged, and the $-1$
Tate twist multiplies $\tr(c)$ by $-1$, proving the claim, hence also
the lemma.

\end{proof}

\section{Main theorem}
It is now a simple matter to prove our main result:

\begin{thm} \label{thm:main}%
Let $N \geq 6$ be an even integer, and let $\ell \gg_N 0$ be a sufficiently large prime. 
\begin{enumerate}
\item There is a totally real field $F$ which is solvable over $\Q$ and infinitely many non-isomorphic Galois representations
\[
\br \colon \Gamma_F \to \mr{SO}_{N+1}(\ov{\F}_{\ell})
\]
such that $\br$ is irreducible as an $\mr{SO}_{N+1}$-valued representation, but reducible as a $\mr{GL}_{N+1}$-valued representation, and $\br$ admits a geometric lift $\rho \colon \Gamma_F \to \mr{SO}_{N+1}(\ov{\Z}_{\ell})$ with Zariski-dense image.
\item Moreover, if we assume that representations
  $\Gamma_{\Q_{\ell}} \to \mr{SO}_{N+1}(\ov{\F}_{\ell})$ admit
  Hodge--Tate regular de Rham lifts
  $\Gamma_{\Q_{\ell}} \to \mr{SO}_{N+1}(\ov{\Z}_{\ell})$, then in part
  (1) we may take $F=\Q$.
\end{enumerate}
\end{thm}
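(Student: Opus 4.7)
The plan is to build $\br$ via the direct-sum construction of \S \ref{pIGP} applied to Zywina's specializations $\vartheta_{\ell, w_i} \colon \Gamma_\Q \to \mr{O}(V_\ell)$ from Proposition \ref{hilbert}, and then to verify the hypotheses of Theorem \ref{fkpthmA}, using a solvable totally real base change to handle the local obstruction at $\ell$ in part (1). Setting $\chi_i := \det(\vartheta_{\ell, w_i})$, define
\[
\br_i := \vartheta_{\ell, w_i} \oplus \chi_i \colon \Gamma_\Q \to \mr{SO}_{N+1}(\F_\ell).
\]
For $N \equiv 0, 2, 4 \pmod 8$, Zywina's image lies in $\mr{SO}(V_\ell)$, so $\chi_i = 1$ and $\br_i = \vartheta_{\ell, w_i} \oplus 1$; for $N \equiv 6 \pmod 8$ I would use Case $6_{\mr{O}}$, where $\chi_i$ is the quadratic character of $\Q(i)/\Q$ and $\det \br_i = \chi_i^2 = 1$. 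The direct-sum form on $V_\ell \oplus \F_\ell$ is equivalent to the standard form on $\F_\ell^{N+1}$ because $V_\ell$ has square discriminant (Proposition \ref{hilbert}), so $\br_i$ is indeed $\mr{SO}_{N+1}$-valued. The $\mr{SO}_{N+1}$-irreducibility of $\br_i|_{\Gamma_{\Q(\zeta_\ell)}}$ (and obvious $\mr{GL}_{N+1}$-reducibility) follow exactly as in Proposition \ref{pIGPprop}: the only proper invariant subspaces are $V_\ell$ and the line $\F_\ell(\chi_i)$, neither of which is isotropic, and the large-image statement of Proposition \ref{hilbert} together with absolute irreducibility of the standard representation of $\Omega(V_\ell)$ for $\ell \gg_N 0$ guarantees absolute irreducibility on the $V_\ell$ summand. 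Varying $w_i$ gives infinitely many non-isomorphic $\br_i$.

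For oddness, the computation from the proof of Proposition \ref{pIGPprop} reduces the condition to $\tr(\br_i(c))^2 = 1$. Combining Lemma \ref{lem:z} with the values of $\chi_i(c)$, the sums $\tr(\vartheta_{\ell, w_i}(c)) + \chi_i(c)$ equal $-2+1,\ 0+1,\ 0+1,\ 0+(-1)$ in the cases $N \equiv 2,\, 4,\, 0 \pmod 8$ and Case $6_{\mr{O}}$ respectively, each of which squares to $1$. This also explains why Case $6_{\mr{O}}$ rather than Case $6_\Omega$ must be used for $N \equiv 6 \pmod 8$: in Case $6_\Omega$, Lemma \ref{lem:z} gives $\tr(\vartheta_{\ell, w_i}(c)) = 2$ with $\chi_i = 1$, hence $\tr(\br_i(c)) = 3$, failing the criterion.

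At ramified primes $p \neq \ell$, \cite[Theorem 1.1]{booher:minimal} with trivial orthogonal multiplier supplies the required $\mr{SO}_{N+1}(\ov{\Z}_\ell)$-valued local lifts. The serious remaining obstacle is the existence of Hodge--Tate regular de Rham lifts at $v \mid \ell$. For part (2), the conditional hypothesis supplies this directly over $\Q$, so $F = \Q$ works and Theorem \ref{fkpthmA} applies immediately. For part (1), I propose to bypass this obstacle by taking $F/\Q$ a solvable totally real extension, linearly disjoint over $\Q$ from both the fixed field of $\br_i$ and from $\Q(\zeta_\ell)$ (so that $\mr{SO}_{N+1}$-irreducibility over $F(\zeta_\ell)$ is preserved, and oddness at each archimedean place of $F$ transfers from the real place of $\Q$ through the inclusion $\Gamma_F \hookrightarrow \Gamma_\Q$), and whose completion at every $v \mid \ell$ contains the local splitting field $L_\ell/\Q_\ell$ of $\br_i|_{\Gamma_{\Q_\ell}}$; then $\br_i|_{\Gamma_{F_v}}$ is trivial, Lemma \ref{lem:triv} produces the required Hodge--Tate regular crystalline lift at $\ell$, and Theorem \ref{fkpthmA} applied to $\br_i|_{\Gamma_F}$ yields the desired geometric lift with Zariski-dense image. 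Existence of such an $F$---solvable, totally real, globally disjoint, and realizing the prescribed local extension at $\ell$---follows from a Grunwald--Wang-style globalization combined with an inductive solvable tower construction, using that $\Gamma_{\Q_\ell}$ (and hence $L_\ell/\Q_\ell$) is prosolvable. The delicate point throughout is this simultaneous control of solvability, totally realness, global disjointness, and prescribed local behaviour at $\ell$ in part (1).
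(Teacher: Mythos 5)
Your overall strategy matches the paper's: take Zywina's specializations $\vartheta_{\ell,w_i}$, extend to $\mr{SO}_{N+1}$ by adjoining a suitable character so that the result is odd and $\mr{SO}_{N+1}$-irreducible but $\mr{GL}_{N+1}$-reducible, and then pass to a solvable totally real $F$ to trivialize the representation at places above $\ell$ so that Lemma \ref{lem:triv} and Theorem \ref{fkpthmA} apply. A few points of comparison and two corrections, however.

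First, your claim that Case $6_\Omega$ ``must'' be avoided because $\tr(\vartheta_{\ell,w_i}(c))+1=3$ fails the oddness criterion is not right as a conclusion: the paper does use Case $6_\Omega$, but \emph{after twisting} $\vartheta_{\ell,w_i}$ by the nontrivial quadratic character $\delta_{K/\Q}$ of an imaginary quadratic field $K$ disjoint from the fixed field of $\vartheta_{\ell,w_i}$, setting $\br = (\delta_{K/\Q}\otimes\vartheta_{\ell,w_i})\oplus 1$. This twist changes the archimedean trace to $-2+1=-1$, restoring oddness, while preserving $\mr{SO}_{N+1}$-irreducibility and $\mr{GL}_{N+1}$-reducibility. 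What fails is only the \emph{untwisted} direct sum $\vartheta_{\ell,w_i}\oplus 1$, not the case itself. Since Case $6_{\mr{O}}$ alone suffices, this does not leave a gap in your proof, but the stated reason for the choice is mistaken, and it is worth knowing that the twist is the device that makes both subcases available (and in general is how one fixes oddness when the untwisted trace is off by a sign). Also note that in Case $6_{\mr{O}}$ your $\br_i = \vartheta_{\ell,w_i}\oplus\chi_i$ differs from the paper's choice, which is the projection of $\vartheta_{\ell,w_i}\oplus 1$ to the $\mr{SO}_{N+1}$-factor of $\mr{O}_{N+1}=\mr{SO}_{N+1}\times\{\pm 1\}$, namely $(\chi_i\otimes\vartheta_{\ell,w_i})\oplus\chi_i$; both work, since they agree over $\Q(i)$ and have the same archimedean trace.

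Second, as written your $F$ depends on $i$ (you refer to the local splitting field of $\br_i|_{\Gamma_{\Q_\ell}}$), which proves a slightly weaker statement than part (1), which asserts a \emph{single} solvable totally real $F$ carrying infinitely many $\br$. The fix is exactly the paper's observation: the local extensions of $\Q_\ell$ cut out by $\br_i|_{\Gamma_{\Q_\ell}}$ have degree bounded independently of $i$, so their compositum $L/\Q_\ell$ is a fixed finite solvable extension, and one chooses a single solvable totally real $F/\Q$ linearly disjoint from $\Q(i,\zeta_\ell)$ with $F_v \cong L$ for all $v\mid\ell$. Linear disjointness of $F$ from each $\Q(\br_i)$ then comes for free from solvability of $F/\Q$ together with the fact that $\Omega(V_\ell)$ (or its simple quotient) has no nontrivial solvable quotient — you do not need to impose it as a separate constraint in the Grunwald--Wang construction, which would otherwise make the construction more delicate than it needs to be.
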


\begin{proof}
  Consider one of the specializations
  $\vartheta_{\ell, w_i} \colon \Gamma_{\Q} \to \mr{O}(V_{\ell})$ from
  Proposition \ref{hilbert}, where $V_{\ell}$ is a quadratic space
  over $\F_{\ell}$ of rank $N$ and trivial discriminant. We have four
  possibilities for these representations as in the table below, where
  we use Lemma \ref{lem:z} for the trace computations.  In each case,
  using the natural inclusion of $\mr{SO}_N$ in $\mr{SO}_{N+1}$, we
  define
  $\br'_{w_i} \colon \Gamma_{\Q} \to \mr{O}_{N+1}(\ov{\F}_{\ell}) =
  \mr{SO}_{N+1}(\ov{\F}_{\ell}) \times \{\pm 1\}$ (note that $N+1$ is
  odd) as follows, letting $\delta_{K/\Q}$ be the non-trivial
  quadratic character of an imaginary quadratic field linearly
  disjoint from the fixed field of $\vartheta_{\ell, w_i}$:

\begin{center}

\begin{tabular}{|c |c |c |c |}
\hline
$N \pmod 8$ & $\vartheta_{\ell, w_i}(\Gamma_{\Q})$ & $\tr(\vartheta_{\ell, w_i}(c))$ & $\br'_{w_i}$ \\ \hline
0 & $\Omega(V_{\ell})$ & 0 
& $\vartheta_{\ell, w_i} \oplus 1$ \\ \hline
2 & $\Omega(V_{\ell})$ & -2 & $\vartheta_{\ell, w_i}  \oplus 1 $ \\ \hline
4 & $\Omega(V_{\ell})$ & 0 & $\vartheta_{\ell, w_i} \oplus 1$ \\ \hline
6, Case $6_{\Omega}$ & $\Omega(V_{\ell})$ & 2 & $(\delta_{K/\Q} \otimes \vartheta_{\ell, w_i}) \oplus 1$ \\ \hline
6, Case $6_{\mr{O}}$ & \tiny{$\Omega(V_{\ell}) \subsetneq \vartheta_{\ell, w_i}(\Gamma_{\Q}) \subsetneq \mr{O}(V_{\ell})$} & 0 & $\vartheta_{\ell, w_i} \oplus 1$ \\ \hline

\end{tabular}

\end{center}

We let $\br_{w_i}$ be the projection to the $\mr{SO}_{N+1}$-component;
of course, $\br_{w_i}= \br'_{w_i}$ except in Case $6_{\mr{O}}$. Each
$\br_{w_i}$ is clearly reducible as
$\mr{GL}_{N+1}(\ov{\F}_{\ell})$-representation, and we claim that even
after restriction to $\Gamma_{\Q(i, \zeta_{\ell})}$ each $\br_{w_i}$
is irreducible as
$\mr{SO}_{N+1}(\ov{\F}_{\ell})$-representation. First note that since
$\Omega(V_{\ell})/\{\pm 1\}$ is a non-abelian simple group,
$\vartheta_{\ell, w_i}(\Gamma_{\Q(i, \zeta_{\ell})})$ also equals
$\Omega(V_{\ell})$. A maximal (proper) parabolic subgroup of
$\mr{SO}_{N+1}$  (as a group over $\ov{\F}_{\ell}$) is
the stabilizer of an isotropic subspace
$W \subset \ov{\F}_{\ell}^{N+1}$. Since in all cases the image
$\vartheta_{\ell, w_i}(\Gamma_{\Q(i, \zeta_{\ell})})$ is
$\Omega(V_{\ell})$, $\br_{w_i}$ stabilizes exactly two proper
subspaces of $\ov{\F}_{\ell}^{N+1}$, namely $V_{\ell}$ itself and the
complementary line (the standard representation
$\Omega(V_{\ell}) \to \mr{SO}(V_{\ell} \otimes \ov{\F}_{\ell})$ is
irreducible for $\ell \gg_N 0$). Clearly neither of these subspaces is
isotropic, so in all cases $\br_{w_i}|_{\Gamma_{\Q(i, \zeta_{\ell})}}$ is
absolutely irreducible  as an $\mr{SO}_{N+1}$-valued
  representation. 

Each $\br_{w_i}$ is odd by the same calculation as in Proposition
\ref{pIGPprop}: this is what demands in Case $6_{\Omega}$
incorporating the twist by $\delta_{K/\Q}$.

Finally, to apply Theorem \ref{fkpthmA}, we have to check a local
lifting hypothesis on $\br_{w_i}$; here is where we will replace $\Q$
by a suitable totally real field. Namely, we do not know at present
that any $\br_{w_i}|_{\Gamma_{\Q_{\ell}}}$ admits a Hodge--Tate
regular de Rham lift to $\mr{SO}_{N+1}(\ov{\Z}_{\ell})$, so we
circumvent this problem by passing to a finite extension. For each
$w_i$, $\br_{w_i}|_{\Gamma_{\Q_{\ell}}}$ cuts out a finite extension
of $\Q_{\ell}$, and as $w_i$ varies these extensions have bounded
degree, so their composite $L/\Q_{\ell}$ is still finite (and
solvable). There is a solvable totally real extension $F/\Q$, linearly
disjoint from $\Q(i, \zeta_{\ell})$,
such that for all primes $v$ of $F$ above $\ell$, the extension
$F_v/\Q_{\ell}$ is isomorphic to $L/\Q_{\ell}$. As $\Omega(V_{\ell})$
has no proper abelian quotient, it follows easily that such an $F$ is
linearly disjoint from $\Q(\br_{w_i})$ for all $w_i$.\footnote{A
  slightly simpler version of this argument would choose the extension
  $F$ depending on $w_i$. This would also enable us to avoid invoking
  Booher's theorem below, by also choosing $F= F(w_i)$ to trivialize
  $\br_{w_i}$ at all primes of ramification.} The required
irreducibility (by linear disjointness) and oddness still hold for
$\br_{w_i}|_{\Gamma_F}$, but now for all places $v \vert \ell$ of $F$,
$\br_{w_i}|_{\Gamma_{F_v}}$ is trivial, and so
$\br_{w_i}|_{\Gamma_{F_v}}$ admits a Hodge--Tate regular crystalline
lift $\Gamma_{F_v} \to \mr{SO}_{N+1}(\Z_{\ell})$  by Lemma \ref{lem:triv}.
At all places $v$ not above $\ell$ at which $\br_{w_i}$ is ramified, there exists a lift $\Gamma_{F_v} \to \mr{SO}_{N+1}(\ov{\Z}_{\ell})$ by \cite[Theorem 1.1]{booher:minimal}. We have therefore satisfied all of the hypotheses of Theorem \ref{fkpthmA}, and so for all $w_i$ there are Hodge--Tate regular geometric lifts $\rho_{w_i} \colon \Gamma_F \to \mr{SO}_{N+1}(\ov{\Z}_{\ell})$ of $\br_{w_i}$.
\end{proof}
\begin{rem}
  As in Remark \ref{notodd}, perhaps the most interesting cases here
  are when $N \equiv 2 \pmod 8$ and Case $6_{\Omega}$ when
  $N \equiv 6 \pmod 8$, since then we begin with $\mr{SO}_N$-valued
  representations that are not odd. Again, existing lifting techniques
  cannot lift these $\mr{SO}_N$-representations to Hodge--Tate regular
  geometric $\mr{SO}_N$-representations; of course, replacing
  $E[\ell]$ by $T_{\ell}(E)$ provides some geometric lift, but it has
  only three distinct Hodge--Tate weights, with high multiplicities
   (since this gives a submodule of
    $H^2(E, \mathbb{Z}_{\ell})$).
\end{rem}

\appendix

\section{The action of complex conjugation on the cohomology
  of a real elliptic surface}\label{complexconj}

 Recall that for any smooth variety $X/\R$ there is a ``transport of structure" isomorphism $\mr{F}_{\infty} \colon H^*(X(\C), \Q) \to H^*(X(\C), \Q)$ induced by the action of complex conjugation on the points of the manifold $X(\C)$; and that functoriality of the Betti-\'{e}tale comparison isomorphism
\[
H^*(X(\C), \Q) \otimes_{\Q} \Q_{\ell} \xrightarrow{\sim} H^*_{\acute{e}t}(X_{\C}, \Q_{\ell}),
\]
implies that the automorphism $\mr{F}_{\infty}$ corresponds to the action of complex conjugation $c \in \gal(\C/\R)$ on the \'{e}tale cohomology. Moreover, in the Hodge decomposition $H^r(X(\C), \Q) \otimes_{\Q} \C = \bigoplus_{p+q=r} H^{p, q}(X(\C))$, $\mr{F}_{\infty}$ exchanges $H^{p, q}(X(\C))$ and $H^{q, p}(X(\C))$. It follows that when $r$ is odd, $\tr(c|H^r_{\acute{e}t}(X_{\C}, \Q_{\ell}))=0$. For $r$ even, however, the contribution of $H^{\frac{r}{2}, \frac{r}{2}}(X(\C))$ terms can make it a subtle matter to compute the trace. In this section, we address this problem for the middle cohomology of certain elliptic surfaces over $\R$. We begin with a general lemma:\footnote{We thank Prakash Belkale for help in simplifying our original proof of this Lemma.}
\begin{lem} \label{lem:trace}%
  Let $M$ be a compact differentiable manifold and $f:M \to M$ a diffeomorphism such
  that $f^n = Id_M$ for some positive integer $n$. Let $F$ be the
  fixed point locus of $f$, i.e., the set of points $x \in M$ such
  that $f(x) = x$. Then
  \[
    \tr(f^*| H^*(M,\Q)) =  \chi(F)
  \]
  where the LHS is the alternating sum of the traces and $\chi$
  denotes the topological Euler characteristic, i.e., the trace of the
  identity map.
\end{lem}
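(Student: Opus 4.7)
The plan is to prove this via the Hopf trace formula applied to a sufficiently well-behaved equivariant triangulation. The first step is to pass to a setting where the fixed set has good structure: average an arbitrary Riemannian metric over the finite cyclic group $\langle f \rangle$ to obtain a metric for which $f$ acts as an isometry. Since the fixed set of an isometry of finite order is a closed smooth submanifold (at each fixed point $p$, $df_p$ is a finite-order orthogonal linear map, so its $+1$-eigenspace is a linear subspace of $T_pM$ which, via the exponential map, is identified with $F$ locally), this makes $F$ a disjoint union of compact smooth submanifolds with a well-defined topological Euler characteristic.

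Next I would invoke the existence of an equivariant triangulation of $M$ on which $\langle f \rangle$ acts simplicially (this is classical for finite group actions on smooth compact manifolds, e.g., by Illman). By passing to a sufficiently fine barycentric subdivision, one may arrange the triangulation to be \emph{regular} in the sense that any simplex $\sigma$ setwise preserved by $f$ is in fact fixed pointwise by $f$. Under this regularity, the simplices fixed pointwise form a subcomplex whose geometric realization is exactly $F$.

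The last step is the Hopf trace formula applied to the simplicial chain complex $C_\bullet(M;\Q)$:
\[
\tr(f^*|H^*(M,\Q)) = \sum_k (-1)^k \tr(f^*|C_k(M,\Q)).
\]
On each $C_k(M;\Q)$, the matrix of $f^*$ in the simplicial basis is a signed permutation matrix, so only simplices fixed setwise (and hence, by regularity, pointwise) contribute $+1$ to the trace; non-fixed orbits contribute zero. Therefore the alternating sum on the right is precisely the Euler characteristic of the fixed subcomplex, which is $\chi(F)$, as desired.

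The main obstacle is arranging the equivariant triangulation to be regular; the bare existence of a simplicial action is the easier half, while ruling out nontrivial permutations of the vertices of a preserved simplex requires the barycentric subdivision trick (or equivalently, replacing the triangulation by the nerve of a sufficiently fine equivariant good cover). Once this is in place the rest is a routine application of the Hopf trace formula. An alternative route would be to invoke a version of the Lefschetz--Hopf fixed point theorem allowing positive-dimensional fixed submanifolds, noting that when $f$ is an isometry of finite order the action of $df$ on the normal bundle to $F$ has no $+1$-eigenvector, so each component contributes its Euler characteristic with a uniformly positive sign; but the simplicial argument sketched above seems more elementary.
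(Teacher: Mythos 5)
Your proof is correct, but it takes a genuinely different route from the paper's. You reduce to a simplicial chain-level computation: average the metric so that $F$ is a smooth submanifold, pass to a $\langle f\rangle$-equivariant triangulation (Illman), barycentrically subdivide once so that any simplex preserved setwise by $f$ is fixed pointwise (this is correct: after subdivision, vertices of a simplex correspond to a flag of simplices of strictly increasing dimension, so a permutation of them must be the identity), and apply the Hopf trace formula. The fixed subcomplex then realizes $F$, and the chain-level trace is immediately $\chi(F)$. The paper instead stays in the smooth/topological category: after the same averaging step, it takes $f$-invariant tubular neighborhoods $T_{\epsilon'} \subset T_\epsilon$ of $F$, sets $A_\epsilon = \overline{T_\epsilon}$ and $B_{\epsilon'} = M \setminus T_{\epsilon'}$, applies the ``no fixed points'' Lefschetz theorem to conclude that $f^*$ has zero alternating trace on $H^*(B_{\epsilon'})$ and $H^*(A_\epsilon \cap B_{\epsilon'})$, and then uses Mayer--Vietoris to reduce to $A_\epsilon \simeq F$, on which $f$ acts as the identity. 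Your approach trades the Lefschetz fixed point theorem and Mayer--Vietoris for the equivariant triangulation theorem plus the Hopf trace formula; it is perhaps more transparent in showing why $\chi(F)$ appears, but it does quietly lean on a nontrivial input (equivariant triangulability of smooth finite group actions). Your closing remark about a Lefschetz--Hopf theorem with positive-dimensional fixed loci is in fact much closer in spirit to what the paper actually does.
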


\begin{proof}
  By averaging any Riemannian metric on $M$ with respect to the
  subgroup of $\mr{Diff}(M)$ generated by $f$, we see that $M$ has an
  $f$-invariant Riemannian metric $g$. Using the exponential map with
  respect to this metric at any point of $F$, we see that $F$ is a
  closed submanifold of $M$ (with connected components possibly of
  varying dimension). For $0 < \epsilon' < \epsilon \ll 1$, let
  $T_{\epsilon}$ (resp.~$T_{\epsilon'}$) be the tubular neighbourhood
  of $F$ in $M$ of radius $\epsilon$ (resp.~$\epsilon'$) constructed
  as in \cite[Theorem 11.1]{milnor-stasheff} using
  the metric $g$. Since $g$ is $f$-invariant, so are these tubular
  neighbourhoods.

  Let $A_{\epsilon}$ be the closure of $T_{\epsilon}$ in $M$ and
  $B_{\epsilon'} = M \bs T_{\epsilon'}$. Both of these sets are
  $f$-invariant compact manifolds with boundary. Since
  $B_{\epsilon'} \cap F = \emptyset $, $f$ has no fixed points on
  $B_{\epsilon'}$. Thus, by the ``no fixed points'' version of the
  Lefschetz fixed point theorem,
  $\tr(f^*| H^*(B_{\epsilon'}, \Q)) = 0$ and
  $\tr(f^*| H^*(A_{\epsilon} \cap B_{\epsilon'}, \Q)) = 0$.

  We now consider the
  Mayer--Vietoris sequence for the cover of $M$ given by
  $A_{\epsilon}$ and $B_{\epsilon'}$. Since $f$ preserves these sets,
  it induces an endomorphism of this sequence, and exactness implies
  that the alternating sums of the traces of $f^*$ on the terms of
  this sequence must be $0$.  Since the
  inclusion of $F$ in $A_{\epsilon}$ is a homotopy equivalence, we get
  that
  \[
    \tr(f^*| H^*(M,\Q)) = \tr(f^*| H^*(A_{\epsilon},\Q)) = \chi(F)
  \]
  as claimed.

  \end{proof}

  \begin{lem} \label{lem:conj}%
    Let $X$ be a smooth projective variety over $\R$. Then
  \[
    \tr(\mr{F}_{\infty}| H^*(X(\C), \Q)) = \chi(X(\R))
  \]
\end{lem}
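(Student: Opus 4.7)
The plan is to deduce this as an immediate specialization of Lemma \ref{lem:trace}, taking $M = X(\C)$ and $f = \mr{F}_{\infty}$. First I would verify the hypotheses: because $X$ is smooth projective over $\R$, the space $X(\C)$ is a compact differentiable manifold (a closed real-analytic submanifold of some $\mathbb{P}^n(\C)$). Since $X$ is defined over $\R$, the map $\mr{F}_{\infty}$ on $X(\C)$ induced by complex conjugation on coordinates is a real-analytic involution, in particular a diffeomorphism satisfying $\mr{F}_{\infty}^2 = \mr{Id}_{X(\C)}$, so we may apply Lemma \ref{lem:trace} with $n = 2$.

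Next I would identify the fixed-point locus: by construction, a point of $X(\C)$ is fixed by $\mr{F}_{\infty}$ if and only if it is defined over $\R$, so the fixed locus $F$ is exactly $X(\R)$. Substituting into the conclusion of Lemma \ref{lem:trace} yields
\[
\tr(\mr{F}_{\infty}^* \mid H^*(X(\C), \Q)) = \chi(F) = \chi(X(\R)),
\]
which is the desired formula.

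There is essentially no obstacle here, since all of the work has been carried out in Lemma \ref{lem:trace}; the present lemma is a purely geometric specialization. The only minor points worth noting are that $X(\R)$ may well be empty (in which case $\chi(X(\R)) = 0$, consistent with Lemma \ref{lem:trace} since the empty set has Euler characteristic $0$), and that its connected components can have varying dimensions—but Lemma \ref{lem:trace} was already stated in a form that accommodates both situations.
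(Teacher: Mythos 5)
Your argument is exactly the paper's: apply Lemma \ref{lem:trace} with $M = X(\C)$, $f = \mr{F}_{\infty}$, and observe that the fixed locus is $X(\R)$. You spell out the hypothesis checks a bit more explicitly than the paper does, but the route and the key observation are identical.
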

\begin{proof}
  This follows immediately from Lemma \ref{lem:trace} since the fixed
  point locus of $\mr{F}_{\infty}$ is precisely $X(\R)$.
\end{proof}

We now turn to the study of real elliptic surfaces. The following lemma is standard for elliptic surfaces over $\C$. Let $C/\mathbb{R}$ be a smooth projective geometrically connected curve.
\begin{lem} \label{lem:chiell}%
  Let $\pi:E \to C$ be an elliptic surface over $\R$. Then
  \[
    \chi(E(\R)) = \sum_{x \in S} \chi(E_x(\R)) ,
  \]
  where $S \subset C(\R)$ is the set of real points over which $\pi$
  is not smooth and $E_x$ is the fibre of $\pi$ over $x$.
\end{lem}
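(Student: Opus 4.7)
The plan is to analyze $\pi|_{E(\R)} \colon E(\R) \to C(\R)$, a proper continuous map of compact spaces, and decompose $E(\R)$ according to whether fibres lie over $S$ or over $U := C(\R) \setminus S$. (Note that fibres of $\pi$ over non-real singular values of $\pi$ do not contribute at all, since $\pi(E(\R)) \subseteq C(\R)$.)

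First I would observe that $\pi$ is smooth as an algebraic morphism on an open neighbourhood of $U$ in $C$, so $\pi|_{E(\R)}$ restricts to a proper surjective submersion $\pi^{-1}(U) \cap E(\R) \to U$ of smooth manifolds. By Ehresmann's theorem this is a locally trivial fibre bundle. The fibre over any $t \in U$ is the real locus of a smooth projective genus-one curve over $\R$; if nonempty it is a compact Lie group of dimension $1$ (using translation by an $\R$-point), hence a finite disjoint union of circles, so in every case the fibre has Euler characteristic $0$.

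Next I would apply additivity of the compactly supported Euler characteristic to the locally closed decomposition $E(\R) = A \sqcup B$ with $A := \pi^{-1}(S) \cap E(\R)$ closed and $B := \pi^{-1}(U) \cap E(\R)$ open. Since $A$ is a finite disjoint union of the compact sets $E_x(\R)$ for $x \in S$, one has $\chi_c(A) = \sum_{x \in S} \chi(E_x(\R))$. For $B$, multiplicativity of $\chi_c$ for fibre bundles (applied connected component by connected component of $U$, which is either an open interval or a circle) gives $\chi_c(B) = \chi_c(U) \cdot \chi(\text{fibre}) = 0$, since the fibre has $\chi = 0$. Adding these and using $\chi(E(\R)) = \chi_c(E(\R))$ because $E(\R)$ is compact yields the desired formula.

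The only point that requires any care is the multiplicativity of $\chi_c$ for the fibre bundle on $B$; but when the fibre has $\chi = 0$ this can equivalently be checked by a direct Mayer--Vietoris argument over each connected component of $U$, where the bundle is either trivial (on an interval, by contractibility) or a mapping torus (on a circle), and Künneth or a straightforward long-exact-sequence computation gives $\chi = 0$. All other ingredients (Ehresmann's theorem, the classification of real points of smooth genus-one curves, additivity of $\chi_c$ for closed/open decompositions) are entirely standard.
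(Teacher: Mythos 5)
Your proof is correct and follows essentially the same route as the paper's: decompose $E(\R)$ into the closed part over $S$ and the open part over $C(\R)\setminus S$, use additivity of $\chi_c$, observe that over the smooth locus the real points form a proper fibre bundle (by Ehresmann) whose fibres are finite unions of circles or empty and hence have $\chi=0$, and conclude by multiplicativity of $\chi_c$ for fibre bundles and compactness of $E(\R)$. The only difference is that you spell out Ehresmann's theorem and the Lie-group structure on the real fibres, which the paper leaves implicit.
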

\begin{proof}
  For a topological space $T$ we denote by $\chi_c(T)$ the Euler
  characteristic with compact supports. If $U\subset T$ is open and
  $Z = T \bs U$, then the long exact sequence of cohomology with
  compact supports implies that $\chi_c(T) = \chi_c(U) + \chi_c(Z)$ (as
  long as all the cohomology groups are finite dimensional).

  Let $U' = C(\R) \bs S$. The map $\pi_{\R}$ induced by $\pi$ from
  $E(\R) \to C(\R)$ is a proper fibre bundle over each connected
  component of $U'$ with fibre homeomorphic to a circle, two disjoint
  circles, or the empty set. The multiplicativity of Euler
  characteristics (with compact supports) for fibre bundles applied to
  the connected components of $U'$ then shows that
  $\chi_c(\pi^{-1}(U')) = 0$. The lemma follows from this, the
  additivity of $\chi_c$, and the fact that $E(\R)$ is compact.
\end{proof}

The following lemma gives the Euler characteristic of the real points
of the types of singular fibres (in the Kodaira classification) of
elliptic fibrations with a section over $\R$ that we need. We note
that in general this depends on the real structure of the fibre, not
just the Kodaira symbol.
\begin{lem} \label{lem:fibre}%
In the notation of the Kodaira classification (see, e.g., \cite[IV.8]{silverman:aec2}), and allowing all $n \geq 0$ in the $I_n^*$ case, we have: 
\begin{align*}
    \chi(I_1) &=
      \begin{cases}
        -1  & \text{ if the fibre is split, } \\
        1   & \text{ if the fibre is non-split;}
      \end{cases} \\
    \chi(I_2) & =
      \begin{cases}
        -2  & \text{ if the fibre is split, } \\
        0   & \text{ if the fibre is non-split;}
      \end{cases}\\
  \chi(II) &= 0 \\
  \chi(III) &= -1  \\
   \chi(I_n^*) &= 
   		\begin{cases}
                  -n-4 & \text{ if all components are defined over
                    $\R$,} \\
                  -n-2 & \text{ if all but two components are defined over
                    $\R$;}
                  \end{cases} \\
   \chi(III^*) &= -7.
\end{align*}
\end{lem}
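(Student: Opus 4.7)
The plan is to proceed case by case through the Kodaira classification, using the explicit description of each singular fibre (geometrically over $\C$) as a union of reduced rational components meeting at prescribed points, together with the action of $c \in \gal(\C/\R)$ on this configuration. Each reduced component defined over $\R$ with at least one real point is a smooth rational curve with real locus $\R\P^1 \cong S^1$, contributing $0$ to the Euler characteristic. Hence, by the additivity of $\chi_c$ used in the proof of Lemma \ref{lem:chiell}, the Euler characteristic of the real locus reduces to a count of real singular points: in a graph of circles glued along real nodes with two real branches each, $\chi$ equals minus the number of such nodes (as for the figure eight).

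For the ``fully split, fully real'' cases this is just a count of edges in the dual graph of the geometric fibre. Types $I_n$ split give cycles with $n$ vertices and $n$ edges, hence $\chi = -n$; specializing to $n = 1, 2$ gives $-1$ and $-2$. Type $III$ has dual graph with two vertices and one edge, giving $\chi = -1$. Type $I_n^*$ with all components defined over $\R$ has dual graph the affine Dynkin diagram $\tilde D_{n+4}$, a tree on $n+5$ vertices with $n+4$ edges, giving $\chi = -(n+4)$. Type $III^*$ has dual graph $\tilde E_7$, a tree on $8$ vertices with $7$ edges, giving $\chi = -7$. The non-reduced multiplicities of the components play no role in the topology of the real locus.

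The remaining cases require tracking the Galois action more carefully. For $I_1$ non-split, the two branches at the unique node are complex conjugate; the node is still real, but is not in the image of the normalization map $\R\P^1 \to E_x(\R)$, so $E_x(\R) \cong S^1 \sqcup \{\mr{pt}\}$ and $\chi = 1$. For $I_2$ non-split, both components are real but the two intersection nodes are complex conjugate, leaving two disjoint real circles with $\chi = 0$. For type $II$, the normalization $t \mapsto (t^2, t^3)$ is a continuous bijection $\R\P^1 \to E_x(\R)$ of compact Hausdorff spaces, hence a homeomorphism, and $\chi = 0$. For $I_n^*$ with all but two components defined over $\R$, the only involution of $\tilde D_{n+4}$ compatible with the multiplicity weights and with exactly two non-real components swaps a pair of leaves attached to a common neighbouring vertex; removing these two leaves and their two incident edges decreases the real edge count by $2$, giving $\chi = -(n+2)$.

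I expect no significant obstacle in carrying this out: the whole lemma is a mechanical case-by-case verification once the dual graphs and the real structures are identified. The only mildly delicate points are the non-split $I_1$ case, where one must recognise that the singular node contributes an extra isolated real point not in the image of the real normalization, and the partial-real case of $I_n^*$, where one must argue that a pair of non-real components necessarily consists of two leaves attached to a common vertex, so that the resulting real count is unambiguous.
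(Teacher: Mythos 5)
Your proof is correct in substance, but it differs from the paper's in that the paper simply cites Silhol \cite[Theorem~4.1]{silhol-RAS}, where the Euler characteristics of all possible singular fibres of a real elliptic surface with section are tabulated. Your argument is a self-contained re-derivation along the same lines Silhol uses: reduce to the reduced real locus (multiplicities are irrelevant), observe that each real component with real points contributes a circle, and count the real nodes via additivity of compactly-supported Euler characteristics. The case checks all come out right: the figure-eight for split $I_1$, the isolated node plus an oval for non-split $I_1$, the two-circle theta-like configuration ($\chi=-2$) for split $I_2$, disjoint ovals for non-split $I_2$, the homeomorphism $\R\P^1 \to E_x(\R)$ for $II$, the wedge of two circles for $III$ (in the fully real case, which is the only one the lemma asserts), the tree counts $-(n+4)$ for $\tilde{D}_{n+4}$ and $-7$ for $\tilde{E}_7$, and the reduction by $2$ when a conjugate pair of leaves is removed in $I_n^*$.

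The one place you gesture rather than argue is the assertion that an involution of $\tilde{D}_{n+4}$ respecting the Kodaira multiplicities, and fixing all but two components, must swap a pair of multiplicity-one leaves attached to the same vertex. This is true: the multiplicity-two chain is rigid under any multiplicity-preserving automorphism that moves only two components, so the moved pair must be two of the four end leaves, and conjugation (being an involution with exactly two non-fixed components) forces them to be adjacent to a common fixed component. It is worth writing this out, since it is the only step that is not a literal count. With that filled in, your direct computation is a perfectly acceptable substitute for the citation; it buys self-containment at the cost of a page of case analysis that the paper avoids by pointing to Silhol.
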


\begin{proof}
  The lemma follows from \cite[Theorem 4.1]{silhol-RAS}, where
  the Euler characteristics of all possible fibres of real elliptic
  surfaces with a section are computed.
\end{proof}

Let $W \subset H^2(E(\C),\Q)$ be the subspace spanned
by the fundamental classes of all irreducible components of all the
(complex) fibres of $\pi$ as well as the fundamental class of a
section $C$ of $\pi$ (defined over $\R$). 
\begin{equation} \label{eq:N}
  W = \Q[C] \oplus \Q[E_{sm}] \oplus \bigoplus_{x \in C(\C)} \frac{(\oplus_i \Q[E_{x,i}])}{\Q[E_x]} ,
\end{equation}
where $E_x$ is the scheme theoretic fibre over $x$, the $E_{x,i}$ are
the reduced irreducible components of $E_x$, $E_{sm}$ is any smooth fibre
and $[D ]$, for an algebraic $1$-cycle $D$, denotes the fundamental
class in $H^2(E(\C), \Q)$. The summand corresponding to any
irreducible fibre $E_x$ is zero, so the sum is actually finite.

If a complex fibre is not defined over $\R$, complex conjugation maps
it to a distinct fibre, so the trace of $\mr{F}_{\infty}$ corresponding to
the subspace of $W$ spanned by all the irreducible components of all
these fibres is $0$. Thus, as far as computing the trace goes, we only
need consider $E_x$ for $x \in C(\R)$. In this case, the
corresponding summand $\frac{(\oplus_i \Q[E_{x,i}])}{\Q[E_x]}$ is
preserved by $\mr{F}_{\infty}$ so it suffices to consider the trace on each
such fibre separately. We now make a list of the possibilities in
terms of the Kodaira type and the real structure as in Lemma
\ref{lem:fibre}. 
\begin{lem} \label{lem:trfibre}%
For  a Kodaira symbol $*$ we denote
by $\tr(*)$, the trace of $\mr{F}_{\infty}$ acting on $\frac{\oplus_i
\Q[E_{x,i}])}{\Q[E_x]}$, where $E_x$ is a real fibre of type $*$. We have:
\begin{align*}
  \tr(I_1) & = 0\\
  \tr(I_2) & = -1 \\
  \tr(II) &  = 0 \\
  \tr(III) & = -1  \\
   \tr(I_n^*) & =
               \begin{cases}
                 -n-4 & \text{ if all components are defined over
                   $\R$,} \\
                 -n-2 & \text{ if all but two components are defined over
                   $\R$;}
               \end{cases}    \\
    \tr(III^*) & =
                  -7. 
\end{align*}
\end{lem}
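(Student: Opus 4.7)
The plan is to compute the action of $\mr{F}_\infty$ on each fundamental class $[E_{x,i}]$, and then on the quotient $V/\Q[E_x]$, where $V := \bigoplus_i \Q[E_{x,i}]$. The key input is the sign formula
\[
\mr{F}_\infty^*[C] = -[\overline{C}] \quad \text{in } H^2(E(\C), \Q),
\]
valid for any irreducible curve $C \subset E_\C$, where $\overline{C}$ denotes the complex-conjugate curve. The sign $-1$ arises because $\mr{F}_\infty$ preserves the orientation of the real $4$-manifold $E(\C)$ (complex conjugation on $\C^2$ being orientation-preserving), but reverses the orientation of the real $2$-manifold $C(\C)$; a short Poincar\'{e}-duality computation then extracts the $-1$. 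Equivalently, under the Betti-\'{e}tale comparison the cycle class lives in $H^2_{\acute{e}t}(E_\C, \Q_\ell(1))$, and $c \in \gal(\C/\R)$ acts on the Tate twist $\Q_\ell(1)$ by $-1$.

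It follows that $\mr{F}_\infty$ acts on $V$ as $-\sigma$, where $\sigma$ is the permutation of the irreducible components of $E_{x, \C}$ induced by complex conjugation. Writing $f_x$ for the number of $\sigma$-fixed components, i.e.\ of components of $E_x$ defined over $\R$, we have $\tr(\mr{F}_\infty \mid V) = -f_x$. Since $E_x = \sum m_i E_{x,i}$ is defined over $\R$, $[E_x] \in V$ is $\sigma$-invariant, so $\mr{F}_\infty$ acts on the line $\Q[E_x]$ by $-1$. Subtracting yields
\[
\tr\bigl(\mr{F}_\infty \mid V/\Q[E_x]\bigr) \;=\; -f_x - (-1) \;=\; 1 - f_x.
\]

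It then remains to identify $f_x$ for each Kodaira type using the classification of real singular fibres (\emph{cf.}\ Silhol, as used in Lemma \ref{lem:fibre}). The irreducible types $I_1$ and $II$ give $f_x = 1$, hence trace $0$. For $I_2$ and $III$, in the configurations arising in this paper both components are defined over $\R$, so $f_x = 2$ and the trace is $-1$. The type $I_n^*$ has $n + 5$ components, and the two sub-cases of the lemma correspond respectively to $f_x = n+5$ (all components real) and $f_x = n+3$ (two complex-conjugate components), giving traces $-n-4$ and $-n-2$. Finally, $III^*$ has $8$ components, all real, so $f_x = 8$ and the trace is $-7$.

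The step most demanding care is the sign formula $\mr{F}_\infty^*[C] = -[\overline{C}]$. I would prove it directly: the restriction $\mr{F}_\infty|_{C(\C)}$ is complex conjugation on a real $2$-manifold and hence orientation-reversing, so sends the fundamental class in $H_2(C(\C), \Q)$ to its negative, while $\mr{F}_\infty$ is orientation-preserving on $E(\C)$; passing through Poincar\'{e} duality extracts the sign $-1$ for $C$ defined over $\R$, and the general form $\mr{F}_\infty^*[C] = -[\overline{C}]$ follows by applying the same argument to any irreducible curve, tracked along with its complex conjugate. One can check consistency with the base case $\P^1_\R$, where $\mr{F}_\infty$ acts as $-1$ on $H^2(S^2, \Q)$ and the cycle class of a real point generates $H^2$. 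The remaining case-by-case bookkeeping is routine given the classification of real Kodaira fibres and the specific real structures present in Zywina's surfaces.
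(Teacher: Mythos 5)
Your proof is correct and takes essentially the same route as the paper. The paper's proof consists of citing Silhol's classification of real singular fibres together with three elementary observations (each real component contributes $-1$ to the trace, each conjugate pair contributes $0$, and $\Q[E_x]$ contributes $-1$), which is precisely the content of your sign formula $\mr{F}_\infty^*[C] = -[\overline{C}]$ and the resulting closed form $\tr = 1 - f_x$; your packaging via that formula is a clean equivalent restatement, and your justification of the sign (orientation-preserving on the $4$-manifold, orientation-reversing on the curve, or equivalently the Tate twist $\Q_\ell(1)$) is the correct elaboration of the paper's terser remark that ``complex conjugation reverses orientation.''
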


\begin{proof}
  The formulae follow from the description of the possible real
  structures in \cite[Theorem 4.1]{silhol-RAS} and the following
  elementary facts:
  \begin{itemize}
  \item If $E_{x,i}$ is a $\C$-irreducible component of $E_x$ defined
    over $\R$ then $\mr{F}_{\infty}$ acts on $\Q[E_{x,i}]$  by $-1$ since
    complex conjugation reverses orientation.
  \item If $E_{x,i}$ and $E_{x,j}$ are two distinct $\C$-irreducible
    components of $E_x$ which are conjugate, then the trace of
    $\mr{F}_{\infty}$ on $\Q [E_{x,i}] \oplus \Q[E_{x,j}]$ is $0$.
  \item $\mr{F}_{\infty}$ acts on $\Q[E_x]$ by $-1$.
  \end{itemize}
  
\end{proof}

The fact that the numbers associated to $I_n^*$ in Lemmas
\ref{lem:fibre} and \ref{lem:trfibre} are the same greatly simplifies
the computations to follow.

\subsection{}
Let $U$ be the maximal open subset of $C_{\C}$ over which the map
$\pi:E_{\C} \to C_{\C}$ is smooth and let $j:U(\C) \to
C(\C)$ be the inclusion. Let $\mc{E}^i$ be the local system on
$U(\C)$ given by $R^i \pi_* (\Q_{\pi^{-1}(U)})$ and let $\mc{F}$ be
the constructible sheaf $j_*(\mc{E}^1)$. Let $V = H^1(C(\C),
\mc{F})$. By the decomposition theorem (\cite{bbd}) and the description of intermediate extension on a smooth curve, we see (noting that the fibres of $\pi$ are connected) that
\begin{align}
R \pi_*(\Q[2]) &\cong j_*\mc{E}^0[2] \oplus j_* \mc{E}^1[1] \oplus j_* \mc{E}^2[0] \oplus \mc{P}_{C \setminus U} \\ 
& \cong \Q[2] \oplus j_* \mc{E}^1[1] \oplus \Q[0](-1) \oplus \mc{P}_{C \setminus U},
\end{align}
where $\mc{P}_{C\setminus U}$ is a sheaf supported on $C \setminus U$: it is clear from the decomposition theorem that $\mc{P}_{C \setminus U}$ must have punctual support, and to see that it is indeed a sheaf placed in degree zero we note that it must be Verdier self-dual. Its stalks are easily computed using the proper base-change theorem, and we obtain
\[
H^2(E(\C), \Q)= H^2(C(\C), \Q) \oplus H^1(C(\C), j_* \mc{E}^1) \oplus H^0(C(\C), \Q(-1)) \oplus \bigoplus_{x \in (C \setminus U)(\C)} 
\frac{(\oplus_i \Q[E_{x,i}])}{\Q[E_x]},
\]
 i.e. a direct sum decomposition 
\[
  H^2(E(\C), \Q) \cong V \oplus W.
\]
(Properties of the cycle class map imply that the cycle classes of the identity section and the smooth fibre, respectively, account for the terms corresponding to $\mc{E}^0$ and $\mc{E}^2$.)
Furthermore, since $U$ is defined over $\R$, complex
conjugation induces an involution on $V$ (which we also denote by
$\mr{F}_{\infty}$) and the isomorphism above is equivariant for this action.

Since $\tr(\mr{F}_{\infty}|H^r(E(\C), \Q))=0$ when $r$ is odd (in fact, for our elliptic surface over $\P^1$ with non-constant $j$-invariant these odd cohomology groups are zero), it is clear that using
Lemma \ref{lem:conj} we may compute the trace of $\mr{F}_{\infty}$ on $V$
if we know $\chi(E(\R))$ and the trace of $\mr{F}_{\infty}$ on $W$, since
the traces on both $H^0(E(\C),\Q)$ and $H^4(E(\C),\Q)$ are equal to
$1$.

\begin{prop} \label{prop:z}%
  Let $\pi:E \to C$ be an elliptic fibration over $\R$ with a section,
  and assume the singular fibres of $\pi$, defined over $\R$, are of
  the following forms:
  \begin{enumerate}
  \item
    There is one singular fibre of type $I_1$ that is not split over
    $\R$, a singular fibre of type $I_2$ that is not split over $\R$,
    a singular fibre of type $III$, and all other singular fibres
    defined over $\R$ are of type $I_0^*$.
  \item
    There is one singular fibre of type $I_1$ that is not split over
    $\R$, a singular fibre of type $I_1$ that is split over $\R$, two
    singular fibres of type $II$, and all other singular fibres defined
    over $\R$ are of type $I_0^*$.
   \item[(3${}_{\mr{O}}$)] There is one singular fibre of type $I_2$ that is split over $\R$, one singular fibre of type $I_2$ that is not split over $\R$, two singular fibres of type $I_4^*$, and all other singular fibres defined over $\R$ are of type $I_0^*$.
   \item[(3${}_{\Omega}$)] There are two singular fibres of type $I_2$ that are split over $\R$, two singular fibres of type $I_4^*$, and all other singular fibres defined over $\R$ are of type $I_0^*$.
   \item[(4)] There is one singular fibre of type $I_1$ that is not split over $\R$, one singular fibre of type $I_2$ that is split over $\R$, one singular fibre of type $III^*$, and all other singular fibres defined over $\R$ are of type $I_0^*$.
  \end{enumerate}
  Then 
  \[
  \tr(\mr{F}_{\infty}| V)= 
     \begin{cases} 2 & \text{in Case (1);} \\
     			  0 & \text{in Case (2);} \\
 			  0 & \text{in Case (3${}_{\mr{O}}$);} \\
			  -2 & \text{in Case (3${}_{\Omega}$);} \\
			  0 & \text{in Case (4).}
     \end{cases}
     \]
\end{prop}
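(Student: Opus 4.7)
The plan is to use the decomposition $H^2(E(\C), \Q) \cong V \oplus W$ already set up in the excerpt to reduce $\tr(\mr{F}_\infty | V)$ to a global Euler characteristic computation plus an explicit calculation on $W$. First I would apply Lemma \ref{lem:conj} to obtain
\[
\chi(E(\R)) = \tr(\mr{F}_\infty | H^*(E(\C), \Q)).
\]
Since $\mr{F}_\infty$ swaps $H^{p,q}$ and $H^{q,p}$, it has vanishing trace on all odd-degree cohomology, while it acts as the identity on $H^0$ and $H^4$. Hence
\[
\chi(E(\R)) = 2 + \tr(\mr{F}_\infty | V) + \tr(\mr{F}_\infty | W).
\]

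Next I would compute $\tr(\mr{F}_\infty | W)$ from the explicit description in (\ref{eq:N}). The classes $[C]$ (for a real section, which exists by hypothesis) and $[E_{sm}]$ (for a smooth real fibre) each contribute $-1$, as $\mr{F}_\infty$ reverses orientation on any real algebraic curve in a real surface. Fibres not defined over $\R$ come in $\mr{F}_\infty$-conjugate pairs and contribute $0$. Each real singular fibre $E_x$ contributes $\tr(E_x)$ as computed in Lemma \ref{lem:trfibre}. Combining with Lemma \ref{lem:chiell} gives the clean formula
\[
\tr(\mr{F}_\infty | V) \;=\; \chi(E(\R)) - 2 - \tr(\mr{F}_\infty | W) \;=\; \sum_{x \in S \cap C(\R)} \bigl[\chi(E_x(\R)) - \tr(E_x)\bigr].
\]

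The key observation that makes this formula usable is that for every $I_n^*$ fibre (both real structures), the values in Lemma \ref{lem:fibre} and Lemma \ref{lem:trfibre} agree, so all $I_0^*$ and $I_4^*$ fibres contribute $0$. Consequently only the fibres of types $I_1, I_2, II, III, III^*$ listed in each Case can matter, and one can read off the individual contributions $\chi(E_x(\R)) - \tr(E_x)$ directly from the two lemmas: $+1$ for a non-split $I_1$, $-1$ for a split $I_1$, appropriate values for $I_2$ (split and non-split) using the same lemmas, and $0$ for $II$, $III$, and $III^*$.

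With the formula and these per-fibre contributions in hand, the proof reduces to a five-line case check. In Case (1) the contributions are $+1$ (non-split $I_1$), $+1$ (non-split $I_2$), $0$ (type $III$), summing to $2$; in Case (2) they are $+1, -1, 0, 0$ giving $0$; in Case (3${}_{\mr{O}}$) they are $-1$ (split $I_2$), $+1$ (non-split $I_2$), giving $0$; in Case (3${}_{\Omega}$) they are $-1, -1$ giving $-2$; and in Case (4) they are $+1, -1, 0$ giving $0$. The only mildly delicate point is the bookkeeping in Step 2 (the contributions of $[C]$ and $[E_{sm}]$, and the need to check that the real-cycle convention $\mr{F}_\infty \cdot [Z] = -[Z]$ for $Z \subset E$ a real curve is consistent with the trace conventions in Lemma \ref{lem:trfibre}); this is already essentially proved in the bullet points of Lemma \ref{lem:trfibre}, so no real obstacle arises.
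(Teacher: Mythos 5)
Your proposal is correct and follows essentially the same route as the paper: the same decomposition $H^2 \cong V \oplus W$, the same Lemmas \ref{lem:conj}, \ref{lem:chiell}, \ref{lem:fibre}, and \ref{lem:trfibre}, and the same key formula $\tr(\mr{F}_\infty | V) = \chi(E(\R)) - 2 - \tr(\mr{F}_\infty | W)$. The only difference is cosmetic but genuinely nicer: you observe that the $-2$ from the section and smooth fibre cancels the $-2$ in the formula, giving the per-fibre expression $\tr(\mr{F}_\infty | V) = \sum_{x} \bigl[\chi(E_x(\R)) - \tr(E_x)\bigr]$ over the real singular fibres, which packages the cancellation of the $I_n^*$ contributions once and for all and makes each case check a one-line sum, whereas the paper recomputes $\chi(E(\R))$ and $\tr(\mr{F}_\infty|W)$ separately (with the $a_1, a_2$ bookkeeping for the $I_0^*$ fibres) in each case.
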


\begin{proof}
  By Lemma \ref{lem:conj} and the preceding discussion,
  \begin{equation} \label{eq:trV}
    \tr(\mr{F}_{\infty} | V ) = \chi(E(\R)) - (2 + \tr(\mr{F}_{\infty}|W)) .
  \end{equation}
  We now compute the RHS in each case.

  Using Lemma \ref{lem:chiell} and the list of singular fibres, we see
  that
  in Case (1)
  \[
    \chi(E(\R)) = 1 + 0 -1 -4a_1 -2a_2,
  \]
  where $a_1$ (resp.~$a_2$) is the number of fibres of type $I_0^*$ of
  the first (resp.~second) type. On the other hand, using Lemma
  \ref{lem:trfibre} and the list of singular fibres
  \[
    \tr(\mr{F}_{\infty}|W) = -2 + 0 -1 -1 -4a_1 -2a_2 ,
  \]
  where the first $-2$ corresponds to the sum of the traces on a
  smooth fibre and the section.  Inserting these numbers in
  \eqref{eq:trV} we get that the LHS is $2$ as claimed.

  By the same method, in Case (2) we have
   \[
    \chi(E(\R)) = 1 -1 +0  -4a_1 -2a_2,
  \]
  and
  \[
    \tr(\mr{F}_{\infty}|W) = -2 + 0 +0 +0 -4a_1 -2a_2 ,
  \]
  so in this case $\tr(\mr{F}_{\infty}|V) =0$ as claimed.
  
  In Case ($3_{\mr{O}}$), we again have cancellation of the contribution from $I_0^*$ and $I_n^*$ fibres, and we find
  \[
  \tr(\mr{F}_{\infty}|V)= -2+0 - (2 -1 -1 -1 -1)=0.
  \]
  
  In Case ($3_{\Omega}$), computing similarly we find $\tr(\mr{F}_{\infty}|V)=-2$.
  
  In Case (4), we likewise find $\tr(\mr{F}_{\infty}|V)= 0$.
\end{proof}

\def\cprime{$'$}
\providecommand{\bysame}{\leavevmode\hbox to3em{\hrulefill}\thinspace}
\providecommand{\MR}{\relax\ifhmode\unskip\space\fi MR }
\providecommand{\MRhref}[2]{%
  \href{http://www.ams.org/mathscinet-getitem?mr=#1}{#2}
}
\providecommand{\href}[2]{#2}


\end{document}